    \renewcommand*{\backref}[1]{}  
    \renewcommand*{\backrefalt}[4]{
       \ifcase #1 
          No cited.
      \or
          Cited on p. #2.
       \else
          Cited on pp. #2.
       \fi}  
\def\bE{\bm{E}}
\def\bP{\bm{P}}
\def\bQ{\bm{Q}}
\def\bT{\bm{T}}
\def\BbbE{\mathbb E}
\def\BbbN{\mathbb N}
\def\cB{{\mathcal B}}
\def\cF{\mathcal{F}}
\def\cS{\mathcal S}
\newcommand*{\MR}[1]{\href{http://www.ams.org/mathscinet-getitem?mr=#1&return=pdf}{\underline{MR #1}}}
\newcommand*{\PMC}[1]{\href{https://www.ncbi.nlm.nih.gov/pmc/articles/#1/}{PubMed Central PMCID: #1 }} 
\newcommand*{\PMID}[1]{\href{https://pubmed.ncbi.nlm.nih.gov/#1/}{PMID: #1 }} 
\newcommand*{\ZBL}[1]{\href{http://www.zentralblatt-math.org/zmath/en/advanced/?q=an:#1&format=complete}{Zbl #1}}
\newcommand*{\doi}[1]{\href{http://dx.doi.org/#1}{doi: #1}}
\newcommand*{\KSzdoi}[1]{\href{http://dx.doi.org/#1}{doi: #1}}
\newcommand{\abbreviations}[1]{%
\vspace{12pt}\noindent{\selectfont\textbf{Abbreviations}\par\vspace{6pt}\noindent {\fontsize{9}{9}\selectfont #1}\par}}
\newtheorem{nthm}{Theorem}[section]
\newtheorem{nlem}[nthm]{Lemma}
\newtheorem{ncor}[nthm]{Corollary}
\newdefinition{nrem}{Remark}[section]
\newdefinition{ndef}[nthm]{Definition}
\newproof{proof}{Proof}
\newproof{pot}{Proof of Theorem \ref{thm2}}
\newproof{notabene}{\color{blue}{Notabene}}
\begin{document}

\begin{frontmatter}



\title{Sequential selections with minimization of failure }

\author[inst1]{Krzysztof J. Szajowski}
\ead{Krzysztof.Szajowski@pwr.edu.pl}
\affiliation[inst1]{organization={Faculty of Pure and Appl. Math.},
                     addressline={Wroclaw University of Science and Technology\\
						                      Wybrzeze Wyspianskiego 27}, 
                            city={Wroclaw},
                        postcode={50-370}, 
                           state={Dolnoslaskie},
                         country={Poland}
										}



\begin{abstract}
The decision-maker (\textbf{DM}) sequentially evaluates up to $ N $ of different, rankable options. \textbf{DM} must select exactly the best one at the moment of its appearance. In the process of searching, \textbf{DM} finds out with each applicant whether she is the best applicant among those assessed so far (we call him a candidate). \textbf{DM} cannot return to rejected candidates. We discuss the psychological aspects of this selection problem, known in the literature as the secretary problem. The analysis is based on knowledge of the chances, and a subjective assessment of acceptance of the positive and negative effects \textbf{DM}'s decision. The acceptance assessment of success and failure is presented on subjective scales.  We set an optimal policy that recommends analyzing applicants up to a certain point in time (a threshold time) without selecting any of them and then selecting the next encountered candidate. The determined optimal threshold depends on the level of acceptance of the positive and negative effects of the choice.  This issue is discussed in the article.
\end{abstract}

\begin{keyword}
Behavioral OR \sep Best choice problem \sep Dowry problem \sep Markov process \sep Optimal stopping \sep Secretary problem \sep Sequential search duration  \sep Ultimatum\sep Uncertain employment   
\MSC Primary 91E99  \sep Secondary: 60G40   \sep 91B06\sep 90B50 
\end{keyword}

\end{frontmatter}




\section{\label{sec:sample1}Introduction.} The author's interest in this work are the mathematical models of decision-making under uncertainty. 
The considered model supports several theories in psychology. The aim of the study is to analyze the avoidance--striving psychological conflict (v. \cite[Chapt. 13]{Sok2005:PsychDecRyz}, \cite{Koz1981:PDT}) in the selection process. The decision-maker (we then use the acronym \textbf{DM} to recall the decision-maker) tries to avoid risks and undesirable situations, and at the same time, he strives to achieve the desired, assumed goals. Adopting this approach leads to the search for a selection model based on the assumption that the final effect has two general states. One is equated with loss, danger, and worst results. The second is usually equated with the achievement of the assumed goal, which brings specific benefits. The choice is a strive-avoidance conflict. It expresses the split between "fear and greed" (v. \cite{Coo1975:Portfolio} introducing \cite{Mar1959:Portfolio} to psychology, cf. also \cite{CapKop2014:Portfolio}). Consider the model of sequential analysis of options in order to choose the best option that they are all different - so only one is the best and the choice of this one will satisfy him if all of them are not exhausted. The choice is risky because the options come in random order. However, it is easy to assess the probability that the analyzed object is the best. The results of the research on this model are known in the literature, both theoretical and experimental. In experimental research, attempts were made to assess to what extent the decision-makers correctly assess the chances that the tested object is the best. This was related to the expansion of information about the analyzed objects, which is justified by the real problems to which this type of model is applied, but does not answer the question about the role of motivational mechanisms, including personality-conditioned goals in the decision problem, in which the effect of the choice is to satisfy goals or failure. Here, how the way decisions are made is influenced by the conflict between "fear and hope". Such selection tasks can be found in the transplant profession. To put it simply, when an organ for transplantation appears, the pending patients are checked for compliance. One of the most important factors for the success of the procedure is the selection of the best patient for the available just now material at the hospital. There is a risk of transplant organ rejection with no compatible patient selection. This is a situation we fear. As a result of such a "matching" procedure, if a compatible patient is not selected early enough, we will lose the opportunity to perform the procedure due to the lack of a candidate for the procedure - this is another circumstance we are concerned about. The candidate's material and the chance of a better one will be significantly lower than the chance that the analyzed patient is the best (and we do not know this at the time of the examination because the remaining patients are to be examined later) (v. \cite{Kras2010:organ}). 

The mathematical model of such a decision-making process, in various modifications, it is known as the \emph{secretary problem} or the  \emph{best choice problem} (\textbf{BCP})\label{KSzBCP}. In a classic task, the pursuit of the goal is clear - we are looking for a strategy that maximizes the probability of success. Now, we will also analyze our attitude to failure by assessing the usefulness of each possible end of the decision-making process. This approach will explain the lack of the observed compliance of the decisions made with the optimal ones, determined in the model with known probabilities. This is a different approach than in \cite{SeaRap1997:HeuristicBCP}, where such a discrepancy is explained by an incorrect assessment of opportunities by \textbf{DM}.

Every unit of time, observation of the implementation of a random variable appears. Its evaluation, together with previous observations, is a source of knowledge aimed at selecting the best implementation at the time of its appearance. A precise description of the objective function and the properties of the observation sequence will allow distinguishing those elements of the modeled phenomenon that affect the rationalization of the procedure - the determination of the optimal strategy. The analyzed phenomenon is common, and we have examples for which the introduced model is a mathematical description. The observation of actual behavior and choices has led  to the conclusion that the actions do not  derive from model suggestions. 
Although,  in solving every day problems, decision makers rarely do not seek guidance in mathematical models and the optimal or rational solutions delivered by them, but rather try to repeat the acceptable results obtained by strategies used in the past, an in-depth analysis of the mathematical models may prompt behaviorists to familiarize themselves with new observations concerning the psychological aspects of final consequence of the decision. 
  
The purpose of further considerations is to show that one can try to influence rational behavior by pointing to seemingly insignificant factors. It is worth considering positive and negative effects of a choice decision - because the strategy is the result of an attempt to strengthen the expected effect and weaken the negative unwanted effects. This is true even when in the decision-making process is achieving a precise goal. If this information is related to preparing for an exceptional decision, e.g., buying expensive goods or choosing a life partner, then we rationalize the assessment of the moment when in our opinion we are ready to take the final action (choosing the best condition). There are mathematical models related to such a task, and their study is of interest to both mathematicians and psychologists. Theoretical and experimental analyzes were developed. The theoretical analysis of the models requires clarifying the details, for a better understanding of the modeled phenomenon and greater consistency of assumptions with the cases encountered in practice. In seeking such an adjustment, the conditions in the adopted model can be weakened by the deviation from the actual natural decision-making process in relation to the suggested rational behavior achieved.  This approach to the problem led to a theoretical analysis of models with weakened assumptions in the work of \citeauthor{Bea2006:NewBCP} (\citeyear{Bea2006:NewBCP}), supported by experimental studies by \citeauthor{BeaMurRap2005:SPExp}~(\citeyear{BeaMurRap2005:SPExp}) and the theoretical model of the paper by \citeauthor{ChoMorRobSam1964:OSSP}~(\citeyear{ChoMorRobSam1964:OSSP}). To increase the usefulness of the \citeauthor{Bea2006:NewBCP}'s model, parameters can be introduced that take into account additional factors influencing the assessment of the situation in decision-makers (\citeauthor{Sza2009:Cardinal}~(\citeyear{Sza2009:Cardinal}), \citeauthor{FerKra2010:CardinalBCP}~(\citeyear{FerKra2010:CardinalBCP})). The classic, simplified selection model assumed that the individual observations do not have a specific, scalar value, the introduction of which in the Beardin's model allowed to explain the tendency to end the observation period earlier than in the classic model of ending the observation period and moving to making final decisions. The ability to take into account such aspects may also allow benefits from the analyses carried out in the work of \citeauthor{AngGut2020:When}~(\citeyear{AngGut2020:When}). 

\subsection{\label{KSzComBCP}A more complete problem of the best choice. }
The crucial assumptions of the considered decision process, called in the former consideration the best choice problem,  are following (v. \citeauthor{Gar60a}~(\citeyear{Gar60a,Gar60b}), \citeauthor{Dyn63}~(\citeyear{Dyn63}), \citeauthor{GilMos1966:maximum}~(\citeyear{GilMos1966:maximum}), \citeauthor{fre83:review}~(\citeyear{fre83:review}), \citeauthor{fer89:who}~(\citeyear{fer89:who}), \citeauthor{sam91:secpro}~(\citeyear{sam91:secpro}) and the monographs  \cite{BerGne1984:ZNV}):  
\begin{enumerate}\itemsep1pt \parskip1pt \parsep1pt
\item There is only one item needed.
\item The number of objects, N, is known in advance. 
\item They are interviewed sequentially in a random order.
\item All the items can be ranked from the best to the worst without any ties. Moreover, the  decision  to accept  or to reject  an item under review must  be  based  solely on  the  relative ranks of the viewed objects. 
\item There is no recall to rejected items later. 
\item \label{DMaim}DM is satisfied with nothing but the very best. In the case of choosing what he needs, the winning payoff is $1$, otherwise $0$.
\end{enumerate}
The proposed mathematical model is based on suitable sequences of random variables and strategies, which are stopping times related to the observed sequences of relative ranks. The optimally means \textit{the maximization of the chance} of realizing the plan of choosing the best option. It is well known that the optimal selection time has the threshold form: at the beginning, we observe and register the relative ranks of the investigated objects. After reaching $k^\star$ item, you choose the first Relative Best. The strategy is appealing, and its simplicity was emphasized by extending it to the odds' algorithm (cf. \citeauthor{Bru2000:Odds}~(\citeyear{Bru2000:Odds})).   

If the selector applies the theoretical solution, statistically the chance of success will be $ \frac{k^\star-1}{N} $. It does not include all the final effects of the application of this strategy, which are reduced to three mutually exclusive events:
\begin{enumerate}\itemsep1pt \parskip1pt \parsep1pt
\item[($\alpha$)] he is successful and chooses the best object;
\item[($\beta$)] he fails because the object he chooses is not the best\footnote{Perhaps it is worth considering the degree of failure resulting from the fact that even if we focus our attention on choosing the best object, we will accidentally choose an object with an absolute rank close to the best. The chances of this are high as shown by the results obtained by~\citeauthor{QuiLaw1996:Exact}~(\citeyear{QuiLaw1996:Exact}). It is choice of a simple majority of positive and a simple minority of negative aspects hire. };
\item[($\gamma$)] the decision maker will not succeed because the strategy used will not lead to the selection of an object.
\end{enumerate}
The described events are exclusive, their observation and interpretation after the completion of the selection is unambiguous. The selector can take into account each of these events to a varying degree, which can be differentiated by allocating proportional wins and penalties, denoted by $\alpha$, $-\beta$ and $-\gamma$, respectively, where $\alpha,\beta,\gamma\in [0,1]$\footnote{It is worth emphasizing that the end of the selection process cannot be described using the cost per observation (cf.~\citeauthor{BarGov1978:Cost}~(\citeyear{BarGov1978:Cost}), \citeauthor{Yeo1998:Costs}~(\citeyear{Yeo1998:Costs})) or discounting (v. \citeauthor{RasPli1975:Discount}~(\citeyear{RasPli1975:Discount}).}.

We want to show that if the decision-maker focuses not only on the main goal, but is also not indifferent to him whether he makes a mistake in choosing the wrong object or its actions will not lead to a choice, he will still use a threshold strategy, but it will be different from the one he uses when these factors are not taken into account. 
We assume that a conscious decision-maker attaches different importance to the potential effects of his decision. We evaluate the significance of each of the possible scenarios for the decision-maker on a scale from $ 0$ to $ 1 $. The higher its value, the more important this aspect is for the decision-maker - adverse events will have a negative impact, and the desired - positive ones.

\subsection{The results.}
The aim of the presented research is to analytically show the impact of the above issue on the form of the optimal strategy (optimal stopping time) and the expected selection time. There is a close relationship between the profit-loss ratio in the problem of choosing the best item for the duration of the decision-making process. It is difficult to predict the exact relationship of these quantities without a detailed analysis. The research covers the problem of choosing the best object, taking into account the penalty for choosing the wrong observation and a different penalty for not choosing the right one. This problem is also addressed when the access to the desired observation is uncertain, but it is possible to retry subsequent observations.   

 The rest of the paper is structured as follows. Section~\ref{KSzVarRisk} provides the formal model of the best item selection, when the decision maker is motivated and conditioned by the awareness of the decision made. These considerations are supported by the method of the optimal stopping for discrete-time Markov processes with a finite horizon, which are shortly recalled in Section~\ref{KSzSecOptSP} of the Appendix. The solution has an analytical form which allows various applications. Section~\ref{KSzDurNIBCP} describes an analysis of the expected time of the selection process. We close with a few concluding remarks in Section \ref{KSzFinRem}. 

\section{\label{KSzNIBCP}Sequential decision problems.} In this section we present the main results of the work. They concern the best object selection model (cf. Introduction in part \ref{KSzComBCP}). We study optimal selection strategies in a model situation in which we analyze the attitude to risks. The technical side of the research comes down to the analysis of optimal stopping problems (v. Section~\ref{KSzSecOptSP} where we remind the formulation of this optimization task). Next, we will show the details of risk approach modeling and the analytical form of optimal strategies. We will discuss the impact of risk attitudes (the level of ”fear and hope”) on optimal strategies. In the last part of this section, we will conduct a similar analysis in the event that the effects of the selection strategy are uncertain - the observed state predicts our expectations but may not be available. This unavailability is an additional risk factor that we propose to include in the model. If the stream of available options is not exhausted, the search continues (v. \cite{Smi1975:uncertain}).

\subsection{\label{KSzVarRisk}The attitude towards risk in the problems of choosing the best object.}
One of the basic sequential selection models is the simple secretary problem described in Section \ref{KSzComBCP}\footnote{For wider references and history of the secretary problem the reader is reffered to philosophical review paper by \cite{fer89:who} and in the monograph by \cite{BerGne1984:ZNV}.}.
The decision maker interviews exactly $N$ options which  are  sequentially presented in random order. He can make one choice without recall to past decision stages. The \textbf{DM} is rewarded by $\alpha$ ($-\beta$), if he accepts an option which is (is not) absolutely best. Doing so does not always lead to the selection of one of the available options. Since the selector's task is to select an object with specific properties, if in the decision-making process it reaches the last possible option, and it is not desired, it means that the strategy used does not allow for the selection of a potential good solution. 

With the sequential observation of the applicants, we connect natural probability space $(\Omega,\cF,\bP)$. The elementary events $\omega  \in \Omega$ are the permutation $\ell_1,\ell_2,\ldots,\ell_N$ of all applicants, and the probability measure $\bP$ is the uniform distribution on $\Omega$. For the sake of attention, let's assume that $ \ell_j $ is the $j$-th applicant's label, and $ \ell_i \neq \ell_j $ when $ i \neq j $. Applicants apply the permutation order. Let $ Z_k = \# \{1 \leq j \leq N: \ell_j \leq \ell_k \} $\label{KSzRR1} be the absolute rank of the $k$-th applicant, and $ Y_k = \# \{1 \leq j \leq k: \ell_j \leq \ell_k \} $ its relative rank. The observable sequence of relative ranks $Y_k,\ k=1,2,\ldots,N$ defines the sequence of the $\sigma$-fields $\cF _k =\sigma (Y_1,\ldots,Y_k),\ k=1,2,\ldots,N$.  The random variables $Y_k$ are independent and $P(Y_k = i) = 1/k$ for $i\in\{1,2,\ldots,k\}$. Denote $\cS^N$ the set of all Markov times $\tau$ with respect to the $\sigma$-fields $\{\cF_k\}_{k=1}^N$ bounded by $N$. We want to forecast, on the basis of relative ranks, the moment of the appearance of the object with the lowest absolute rank. The success is granted by $\alpha$ and the failure is penalized by $\beta$. The case of no choice gives the penalty  $\gamma$. This assumption should be taken into account by extending the state space $\BbbE=\{1,2,\ldots,N\}\cup\{\partial\}$, where $\partial$ is the \emph{absorbing state} which concerns the situation when, while searching for the object with the lowest rank, we miss the moment of its appearance and no new candidate will appear during further search.   

Therefore, taking into account the risk (omitting the last opportunity to pick the desired object or accept the one that is not the best) and purpose (choosing the best), the objective for DM is to maximize the expected payoff. It is an attitude towards risk in the problems of choosing the best object (v. \cite{WebBlaBet2002:RiskAttitude}). The task comes down to determining the value of the problem  
\begin{align}\label{KSzPR1}
v&=\displaystyle\sup_{\tau\in\cS^N}\left[\alpha\bP\{\omega: Z_{\tau}=1,\tau\leq N\}-\beta \bP\{\omega: Z_{\tau}\neq 1,\tau\leq N\}\right.\\
\nonumber&\qquad\left.-\gamma\bP\{\omega: Z_{\tau}\neq 1,\tau\geq N\}\right].\\
\intertext{and proving the existence of some strategy $\tau^\star$}
\label{KSzPR2}
&\left[\alpha\bP\{\omega: Z_{\tau^\star}=1,\tau^\star\leq N\}-\beta \bP\{\omega: Z_{\tau^\star}\neq 1,\tau^\star\leq N\}\right.\\
\nonumber&\qquad\left.-\gamma\bP\{\omega: Z_{\tau^\star}\neq 1,\tau^\star\geq N\}\right]=v, 
\end{align}
which maximizes the probability of selecting the best facility and at the same time minimizes the risk of ending the procedure without selecting or selecting an unsatisfactory facility. 
If such a strategy exists, it should be defined and described so that it is understandable to the average decision maker. The event $\{\omega: Z_{\tau^\star}=1,\tau^\star\leq N\}$ will be called $\textbf{DM}_\text{win}$. \label{KSzDMwin}

The problem can be reduced to the optimal stopping problem for the homogeneous Markov chain (v.\cite{DynYus1969:MarkovProcProbl}, \cite{Sza1982:a-th}) with suitable payoff functions. With the above-adopted method of defining the best object, the decision-maker should be interested in applicants with relative ranks $1$. It will be called the \emph{candidate}. We will therefore describe the moments when there is a chance for success for him. Let $W_1 = 1$. Define $ W_t = \inf\{r>W_{t-1}:Y_r=1\},\ t>1, $ ($\inf\emptyset = \partial $). $(W_t,\cF_t,\bP_{r})_{t=1}^N$, $r\in\bar{\BbbE}$, is the homogeneous Markov chain with the state space $\bar{\BbbE} = \{1,2,...,N\}\cup\{\partial\}$, $\cF_t = \sigma(W_1,W_2,...,W_t)$ and the following one-step transition probabilities: $ p(r,s) = \bP \{W_{t+1}=s\mid W_t = r\} = {\frac{r}{s(s-1)}}$ if $1\leq r<s\leq N$, $p(r,\partial) = 1 - \sum_{s=r+1}^N p(r,s)$, $p(\partial,\partial) = 1$ and $0$ otherwise. The transition probability $p(r,s)$ is the chance that next candidate appears at moment $s$ given that at current moment $r$ is the relative $1$ (i.e. $Y_r=1$). When the \textbf{DM} reaches the state $\partial$, the process automatically stops and should pay penalty $\gamma$. Let $f:\BbbE\rightarrow \Re$. The operator $\bT f(k)=\bE_k f(W_1)=\sum_{s\in\bar{\BbbE}}f(s) p(k,s)=\sum_{s=k+1}^N p(k|s)f(s)+f(\partial)p(k|\partial)$, where $\bE_\cdot f(\cdot)$ is the expected value with respect to transition probabilities. The option under consideration at the moment $k$ which is the candidate for the winning choice should be the relative first. The process is called to be in the state $k$, where $1\leq k\leq n$, if the decision maker has the $k$-th option to be a candidate. Let us assume that no stop has been made until the state $k$. When choosing this option, the expected reward is equal
\begin{equation}\label{reward1}
g(k)=\left\{\begin{array}{ll}
(\alpha+\beta)\frac{k}{N}-\beta.&\text{ if $k\in\{1,2,\ldots,N\}$}\\
-\gamma &\text {otherwise.}
\end{array}
\right.
\end{equation} 

The classical secretary problem is the case when $\alpha=1$ and $\beta=0$ (cf. \cite{Sak1984:Bilateral}, \cite{Fer2016:Odds}). The case of penalty equals $\beta=1$ is also ibid (see also \cite[sec. 5.2]{Rib2018:OnLineFirst}). The main objective to investigate is to check the player's strategy behavior for their various values. The additional effect is the observation that the optimal strategy could be derived based on the odds-theorem (cf.  \cite{BruPai2000:Selecting}, \cite{Bru2000:Odds}, \cite{Den2013:Odds}). 

Denote $V(k)$ the maximum expected payoff when the process $W_t$ is in the state $k$. The optimality principle of dynamic programming gives the equation:
\begin{align}\nonumber
V(k)&=\max\{g(k),\bT V(k)\}\\[-2ex]
\label{optrewardeq}&\quad \\[-2ex]
\nonumber&=\max\{(\alpha+\beta)\frac{k}{N}-\beta,\sum_{j=k+1}^N\frac{k}{j(j-1)}(\gamma+V(j))-\gamma\},
\end{align}
$k=1,2,\ldots,N$, $V(N)=\alpha$.

It is worth emphasizing that the equation \eqref{optrewardeq} is different from the analogous one in the classic \textbf{BCP} due to the different payout function. Thus, one can expect a different optimal strategy (\emph{optimal stopping time}) and a chance to choose the best object when applying it. Let us define 
\begin{equation}\label{Hkn}
H_{k,N}=\sum_{j=k}^{N-1}\frac{1}{j},
\end{equation}
where $\frac{k}{N}H_{k,N}$ is the chance that the next candidate appears at moment $j>k$ given that at the current moment $k$ is also the candidate.
\begin{nthm}[The value and the optimal threshold.]\label{OneDMNIBCP} 
The solution of \eqref{optrewardeq} is 
\begin{equation}\label{MEPV}
V(k)=\left\{
    \begin{array}{ll}
        \frac{k^\star-1}{N}\left((\alpha+\beta)H_{k^\star,N}+\beta-\gamma\right)-\beta&\text{for $k=1,2,\ldots,k^\star-1$}\\
        (\alpha+\beta)\frac{k}{N}-\beta&\text{for $k=k^\star,\ldots,N$,}
    \end{array}
\right.
\end{equation}
where $k^\star$ is the integer $k$ for which $H_{k-1,N}\geq\frac{\alpha+\gamma}{\alpha+\beta}>H_{k,n}$. 
\end{nthm}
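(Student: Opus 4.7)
The plan is to proceed by backward induction on $k$, exploiting the monotone structure of the optimal stopping problem \eqref{optrewardeq}.

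First I would establish the threshold form of the stopping set. Starting from $V(N)=g(N)=\alpha$ and working downward, suppose inductively that $V(j)=g(j)$ for all $j>k$. Then, by \eqref{optrewardeq}, $\bT V(k)=\sum_{j=k+1}^{N}\frac{k}{j(j-1)}g(j)-\gamma\frac{k}{N}$. Using the telescoping identity $\sum_{j=k+1}^{N}\frac{1}{j(j-1)}=\frac{1}{k}-\frac{1}{N}$ together with $\sum_{j=k+1}^{N}\frac{1}{j-1}=H_{k,N}$, this reduces algebraically to $\bT V(k)=\frac{k}{N}\bigl[(\alpha+\beta)H_{k,N}+\beta-\gamma\bigr]-\beta$, so the inequality $g(k)\ge\bT V(k)$ collapses, after cancelling $-\beta$ and the positive factor $k/N$, to the clean condition $H_{k,N}\le\frac{\alpha+\gamma}{\alpha+\beta}$. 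Since $H_{k,N}$ is strictly decreasing in $k$ with $H_{1,N}$ exceeding one and $H_{N,N}=0$, a unique integer $k^\star$ satisfies $H_{k^\star-1,N}\ge\frac{\alpha+\gamma}{\alpha+\beta}>H_{k^\star,N}$, stopping is preferred at every $k\ge k^\star$, and the induction is self-consistent on $\{k^\star,\ldots,N\}$, yielding $V(k)=g(k)$ there.

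For $k<k^\star$ I would compute $V(k)$ directly by evaluating the expected payoff of the threshold strategy. The key probabilistic observation is that, by independence of the relative ranks $Y_j$, the first candidate occurring at time $k^\star$ or later sits at position $j$ with probability $\frac{k^\star-1}{j(j-1)}$ for $j=k^\star,\ldots,N$, while the event that no candidate appears on $\{k^\star,\ldots,N\}$ has probability $\frac{k^\star-1}{N}$; crucially, both expressions are independent of the starting state $k<k^\star$. Substituting into $V(k)=\sum_{j=k^\star}^{N}\frac{k^\star-1}{j(j-1)}g(j)-\gamma\frac{k^\star-1}{N}$ and collapsing the two sums with the same telescoping identities produces the closed form claimed in \eqref{MEPV}. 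The comparison $V(k^\star-1)\ge g(k^\star-1)$, which follows directly from the left inequality in the characterization of $k^\star$, then confirms that continuation is indeed optimal throughout $\{1,\ldots,k^\star-1\}$ and closes the induction.

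The main obstacle I anticipate is bookkeeping rather than depth. The derivation of $\bT V(k)$ under the assumption $V(j)=g(j)$ for $j>k$, and the direct computation of $V(k)$ on the continuation region, rely on nearly identical telescoping sums but over slightly different index ranges, so one must be careful about off-by-one shifts in $H_{\cdot,N}$ between the characterization of $k^\star$, the one-step continuation value, and the closed form for $V(k)$ on $\{1,\ldots,k^\star-1\}$. Once the summation limits are aligned, verifying both threshold inequalities and matching the resulting expression to \eqref{MEPV} is a short calculation.
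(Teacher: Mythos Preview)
Your argument is correct and arrives at the same one-step look-ahead inequality $H_{k,N}\le\frac{\alpha+\gamma}{\alpha+\beta}$ as the paper, via the same telescoping computation of $\bE_k g(W_1)$. The routes differ in how optimality of the resulting threshold is established. The paper's proof is essentially one line: it identifies the OLA region $B=\{k:g(k)\ge\bE_k g(W_1)\}$ and then invokes the monotone-case criterion of Chow, Robbins, and Siegmund to conclude that $B$ coincides with the optimal stopping set, without writing out the continuation value. You instead carry the backward induction by hand---verifying that the hypothesis $V(j)=g(j)$ is self-consistent on $\{k^\star,\ldots,N\}$, then computing the value of the threshold strategy on the continuation region and checking directly that it dominates $g$ there. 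Your version is longer but self-contained and has the side benefit of actually deriving the closed form for $V(k)$ on $\{1,\ldots,k^\star-1\}$, which the paper merely asserts; the paper's version is shorter but outsources the structural step to a standard reference.

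One caution on the bookkeeping you already flagged: your summation $\sum_{j=k^\star}^{N}\frac{k^\star-1}{j(j-1)}g(j)-\gamma\frac{k^\star-1}{N}$ telescopes to $\frac{k^\star-1}{N}\bigl[(\alpha+\beta)H_{k^\star-1,N}+\beta-\gamma\bigr]-\beta$, with $H_{k^\star-1,N}$ rather than $H_{k^\star,N}$, so be prepared to reconcile a one-step index shift against the displayed formula in \eqref{MEPV}; asymptotically the two agree, which is all that the subsequent corollary uses.
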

\begin{proof}
The proof is based on the construction of the one-step look-ahead stopping region
\begin{align*}
    B&=\{k: g(k)\geq {\mathbf E}_kg(W_1)\}\\
		&=\{k: (\alpha+\beta)\frac{k}{n}-\beta\geq \frac{k}{n}\left((\alpha+\beta)H_{k,n}+\beta-\gamma\right)-\beta\\
    &=\{k:H_{k,n}\leq \frac{\alpha+\gamma}{\alpha+\beta}\}.
\end{align*}
This is the optimal stopping region, since the conditions for the monotone case of \citeauthor{ChoRobSig1971:Great}(\citeyear{ChoRobSig1971:Great}) (see also \citeauthor{Shi2008:OSR}~(\citeyear{Shi2008:OSR})) are fulfilled. The asymptotic result is evident.\qed 
\end{proof}

\begin{table}[tbh!]
\centering
\caption{\label{KSzExDwin}The optimal thresholds and the probability for choosing the best applicant when $\alpha=1$ and some levels of penalty for each type of failure.}
\begin{tabular}{|c|c||l|l|}\hline
$\beta$&$\gamma$&\multicolumn{1}{|c|}{ $t^*(1,\beta,\gamma)$}&\multicolumn{1}{|c|}{${\mathbf P}(\textbf{DM}_\text{win})$}\\ \hline\hline 
1&0&$\exp(-.5)\cong 0.60653$ &$\frac{1}{2}\exp(-.5)\cong 0.30327$ \\[0.25ex]
0.75&0.25&$\exp(-\frac{5}{7})\cong 0.48954$ &$\frac{5}{7}\exp(-\frac{5}{7})\cong 0.34967$\\[0.25ex]
$\beta_0$&$\beta_0$&$\exp(-1)\cong 0.36788$ &\hspace{1em}$\exp(-1)\cong 0.36788$ \\[0.25ex]
0.25&0.75&$\exp(-\frac{7}{5})\cong 0.24660$&\! $\frac{7}{5}\exp(-\frac{7}{5})\cong 0.34524$\\ [0.25ex]
0&1&$\exp(-2)\cong 0.13534$                &$2 \exp(-2)\cong 0.27067$ \\ \hline
\end{tabular}
\end{table}
\begin{ncor}\label{MEKSzOSCorr}
The optimal strategy of the decision maker is $\tau^\star=\inf\{1\leq k\leq n: \text{the first stage }  k\geq k^\star-1\}$.
\end{ncor}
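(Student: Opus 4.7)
The plan is to obtain the corollary essentially as a direct reading of the one-step look-ahead (OLA) stopping region that was already identified inside the proof of Theorem~\ref{OneDMNIBCP}. The statement to establish is not a computation but a characterization: I must justify that stopping at the first candidate whose index has crossed the threshold yields an $\varepsilon$-free optimum, not merely a locally good decision. I will do this by invoking the monotone-case criterion of Chow--Robbins--Siegmund (already cited in the theorem's proof) applied to the Markov chain $(W_t)$ with reward $g$ defined in~\eqref{reward1}.

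First, I would recall from the theorem's proof that the OLA set is
\begin{equation*}
B \;=\; \bigl\{k\in\BbbE : g(k)\geq \bT g(k)\bigr\} \;=\; \Bigl\{k : H_{k,N}\leq \tfrac{\alpha+\gamma}{\alpha+\beta}\Bigr\},
\end{equation*}
and that, because $H_{k,N}$ is strictly decreasing in $k$, $B$ takes the form $\{k:k\geq k^\star\}$, with $\partial\in B$ automatically (once absorbed, no further reward is available). Next I would verify the monotone-case hypothesis: whenever the chain $(W_t)$ enters $B$, it stays in $B$. This is immediate because transitions from a state $k\geq k^\star$ lead only to states $s>k$ (hence still in $B$) or to the absorbing $\partial\in B$. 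Combined with the fact that $B$ is reached in finitely many steps ($N$ is the horizon), the Chow--Robbins--Siegmund criterion declares the OLA rule $\tau^{\mathrm{OLA}}=\inf\{t:W_t\in B\}$ optimal.

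Finally I would translate this stopping rule back into the applicant-indexed formulation. Because $W_t$ is, by construction, the index of the $t$-th candidate in the original sequence $Y_1,\dots,Y_N$, the event $\{W_t\in B\}$ is the event that the applicant currently under review is a candidate (a relative best) whose index has reached the threshold. This gives exactly $\tau^\star=\inf\{1\leq k\leq N : Y_k=1 \text{ and } k\geq k^\star-1\}$, i.e.\ the threshold rule stated in the corollary (with the $k^\star-1$ boundary simply reflecting that in the theorem $k^\star$ is defined so that $k^\star-1$ is the first index with $H_{k-1,N}\geq (\alpha+\gamma)/(\alpha+\beta)$).

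The only genuine obstacle is the verification of the monotone-case closure of $B$; everything else is bookkeeping. That verification in turn reduces to the decreasing-in-$k$ property of $H_{k,N}$, which is trivial. I would also briefly note that the boundary cases --- namely $k^\star=N$ (forced last-applicant choice) and the possibility that no candidate appears after stage $k^\star-1$ (absorption into $\partial$ with penalty $-\gamma$) --- are already accommodated by the definition of $g$ in~\eqref{reward1}, so no separate argument is required.
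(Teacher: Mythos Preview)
Your proposal is correct and follows precisely the route the paper takes: the corollary is stated without a separate proof and is meant to be read off directly from the OLA region $B=\{k:H_{k,N}\le(\alpha+\gamma)/(\alpha+\beta)\}$ identified in the proof of Theorem~\ref{OneDMNIBCP}, together with the invocation of the Chow--Robbins--Siegmund monotone case already made there. Your explicit verification that $B$ is closed under the transitions of $(W_t)$ (states only increase or absorb) is exactly the content of ``the conditions for the monotone case are fulfilled'' that the paper leaves unexpanded; one minor slip is your parenthetical explanation of the $k^\star-1$ boundary, since $H_{k-1,N}\ge(\alpha+\gamma)/(\alpha+\beta)$ holds for all \emph{small} $k$ (the sequence $H_{k,N}$ is decreasing), so $k^\star$ is the \emph{last} such index, not the first.
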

In other words, the optimal policy is to reject the first $k^\star-1$ options and then to accept the first candidate thereafter. 
\begin{ncor}\label{KSzAsymPR12}When $N\rightarrow\infty$ we have:
\begin{align}\label{KSzAsTres}
    t^\star=t^\star(\alpha,\beta,\gamma)&=\lim_{N\rightarrow\infty} \frac{k^\star}{N}=\exp\left(-\frac{\alpha+\gamma}{\alpha+\beta}\right)\\
\label{KSzAsVal}    V(0^{+})&=\lim_{N\rightarrow\infty} \frac{k^\star-1}{N}\left((\alpha+\beta)H_{k^\star,N}+\beta-\gamma\right)-\beta\\
		&=(\alpha+\beta)\exp\left(-\frac{\alpha+\gamma}{\alpha+\beta}\right)-\beta\\
\label{KSzAsProb} {\mathbf P}(\textbf{DM}_\text{win}) &=\frac{\alpha+\gamma}{\alpha+\beta}\exp\left(-\frac{\alpha+\gamma}{\alpha+\beta}\right).
\end{align}
\end{ncor}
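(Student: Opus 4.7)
The plan is to extract all three asymptotic identities from the single defining relation
$H_{k^\star-1,N}\geq \frac{\alpha+\gamma}{\alpha+\beta}> H_{k^\star,N}$
supplied by Theorem~\ref{OneDMNIBCP}, then substitute into the explicit expressions \eqref{MEPV} and into a direct formula for the winning probability. Set for brevity $c=\frac{\alpha+\gamma}{\alpha+\beta}$.

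First I would pin down the asymptotic behaviour of $k^\star/N$. Using the classical harmonic-number expansion $\sum_{j=1}^{m}\frac{1}{j}=\ln m+\gamma_E+O(1/m)$, one has $H_{k,N}=\ln(N-1)-\ln(k-1)+O(1/k)$. Applying this in the inequality defining $k^\star$ gives $\ln(N/k^\star)\to c$, hence $k^\star/N\to e^{-c}$, which is \eqref{KSzAsTres}. A byproduct, important for the next two steps, is that both $H_{k^\star,N}$ and $H_{k^\star-1,N}$ converge to $c$, since their difference equals $1/(k^\star-1)=O(1/N)$.

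Next, for \eqref{KSzAsVal}, I would simply substitute these limits into the first branch of \eqref{MEPV}:
\begin{equation*}
\frac{k^\star-1}{N}\bigl((\alpha+\beta)H_{k^\star,N}+\beta-\gamma\bigr)-\beta\longrightarrow e^{-c}\bigl((\alpha+\gamma)+\beta-\gamma\bigr)-\beta=(\alpha+\beta)e^{-c}-\beta.
\end{equation*}
For \eqref{KSzAsProb}, I would first write $\mathbf{P}(\textbf{DM}_{\text{win}})$ in closed form \emph{before} passing to the limit. By Corollary~\ref{MEKSzOSCorr} the optimal rule skips the first $k^\star-1$ observations and then accepts the first relative best; the event that the overall best occurs at position $k\geq k^\star$ and that no relative best appears in positions $k^\star,\dots,k-1$ has probability $\frac{1}{N}\cdot\frac{k^\star-1}{k-1}$, so
\begin{equation*}
\mathbf{P}(\textbf{DM}_{\text{win}})=\sum_{k=k^\star}^{N}\frac{k^\star-1}{N(k-1)}=\frac{k^\star-1}{N}\,H_{k^\star-1,N}\longrightarrow e^{-c}\cdot c=c\,e^{-c}.
\end{equation*}

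I expect no single step to be genuinely hard; the only subtlety is checking that $k^\star/N$ converges, rather than merely having convergent subsequences. This is handled by the monotonicity of $k\mapsto H_{k,N}$: the two inequalities sandwich $k^\star$ uniquely, and the asymptotic for $H_{k,N}$ then forces a single limit. Everything else is bookkeeping with harmonic sums and substitution into the formula from Theorem~\ref{OneDMNIBCP}.
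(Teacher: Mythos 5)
Your proof is correct, but it follows a genuinely different route from the paper's. The paper disposes of this corollary in one sentence, invoking the approximation of the difference equations by differential equations (in the style of Mucci and Dynkin--Yushkevich): one passes to a continuous state $t=k/N$, replaces the recursion \eqref{optrewardeq} by an ODE for the limiting value function, and reads off the threshold and value from its solution. You instead bypass the differential equation entirely and compute the limits directly from the explicit finite-$N$ objects: the sandwich $H_{k^\star-1,N}\geq\frac{\alpha+\gamma}{\alpha+\beta}>H_{k^\star,N}$ plus the harmonic-sum asymptotics $H_{k,N}\sim\ln(N/k)$ give \eqref{KSzAsTres}, substitution into the first branch of \eqref{MEPV} gives \eqref{KSzAsVal}, and the classical decomposition of the winning event for the threshold rule of Corollary~\ref{MEKSzOSCorr} gives $\mathbf{P}(\textbf{DM}_\text{win})=\frac{k^\star-1}{N}H_{k^\star-1,N}\to c\,e^{-c}$, which agrees with the paper's expression $\frac{k^\star-1}{N}H_{k^\star,N}$ up to an $O(1/N)$ discrepancy. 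Your approach buys self-containedness and rigor: since Theorem~\ref{OneDMNIBCP} already supplies the exact finite-$N$ solution, there is no need for the heavier machinery, and your remark on why $k^\star/N$ has a unique limit (monotonicity of $k\mapsto H_{k,N}$) closes a point the paper leaves implicit. The paper's cited technique is more general --- it applies when no closed-form finite-$N$ solution is available, as in the uncertain-employment variant of Theorem~\ref{OneUnBCP} --- but for this particular corollary your direct computation is the cleaner argument.
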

The proof of asymptotic dependency is based on the approximation of difference equations by differential equations (cf.~\cite{DynYus1969:MarkovProcProbl}, \cite{Muc1973:Diff}, \cite{Muc1973:CSP}, \cite{Sza1982:a-th}). The thresholds behavior as function of $\alpha$, $\beta$ and $\gamma$ according the formulae \eqref{KSzAsTres} is presented on Figures~\ref{KSzSiec23} and \ref{fig:3DThreshold2} (i.e. optimal thresholds of the problem~\eqref{KSzPR2}). The probability of success of the optimal strategy and optimal threshold for some levels of concern about choosing the wrong option or not being able to make a selection are presented in Table~\ref{KSzExDwin}. Increasing the fear of being left without selection speeds up the acceptance of candidates, while the fear of getting the wrong option tends to delay one.

\begin{figure}[tbh!]
{\noindent
\small
\begin{subfigure}[b]{0.45\textwidth}
\centering {\includegraphics[width=\textwidth]{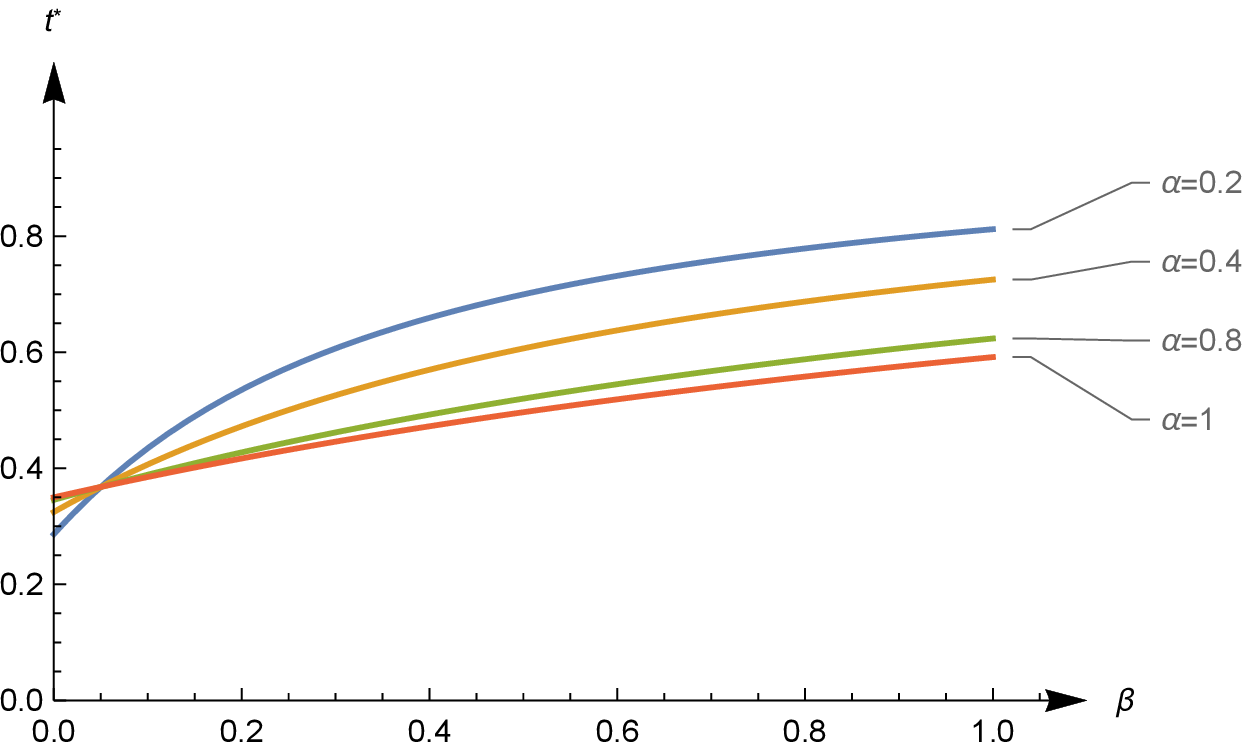}}
\caption{\label{siec2a}$\gamma=0.05$.}
\end{subfigure}
\hfill\begin{subfigure}[b]{0.45\textwidth}
\centering {\includegraphics[width=\textwidth]{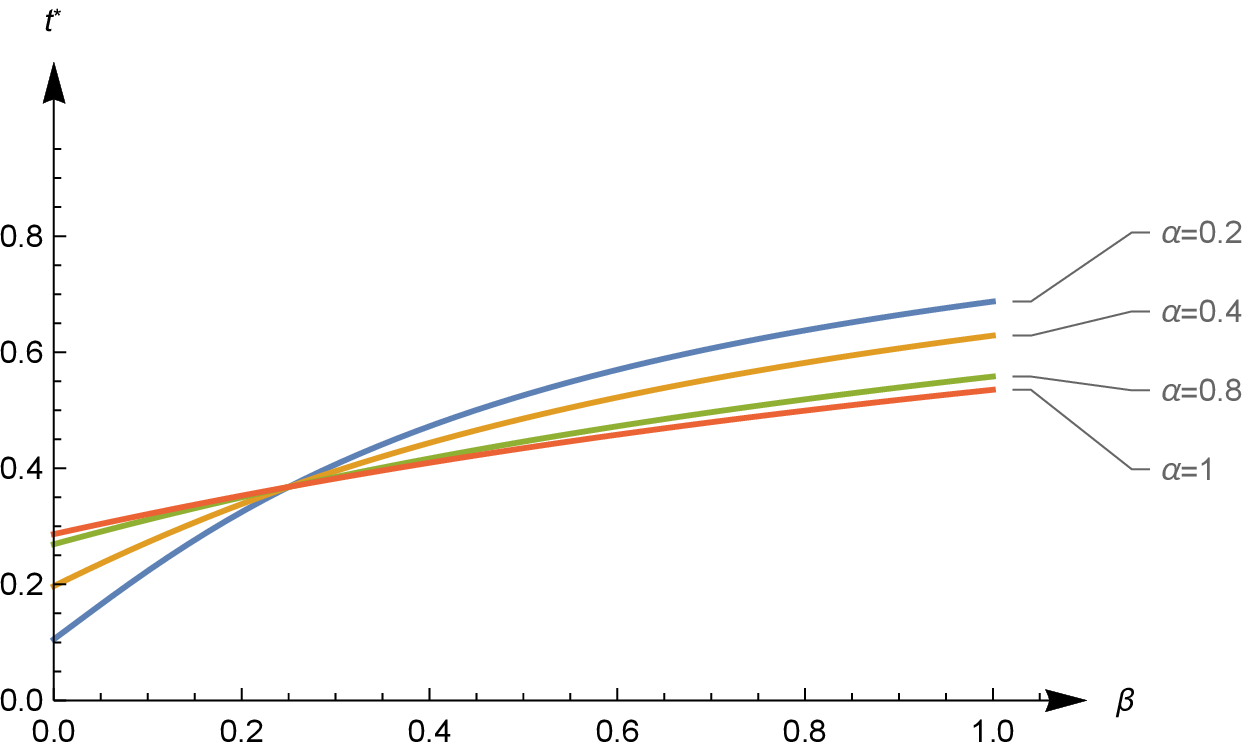}}
\caption{\label{siec3a}$\gamma=0.25$.}
\end{subfigure}
}
\vspace{-1mm}
{\noindent
\small
\begin{subfigure}[b]{0.45\textwidth}
\centering {\includegraphics[width=\textwidth]{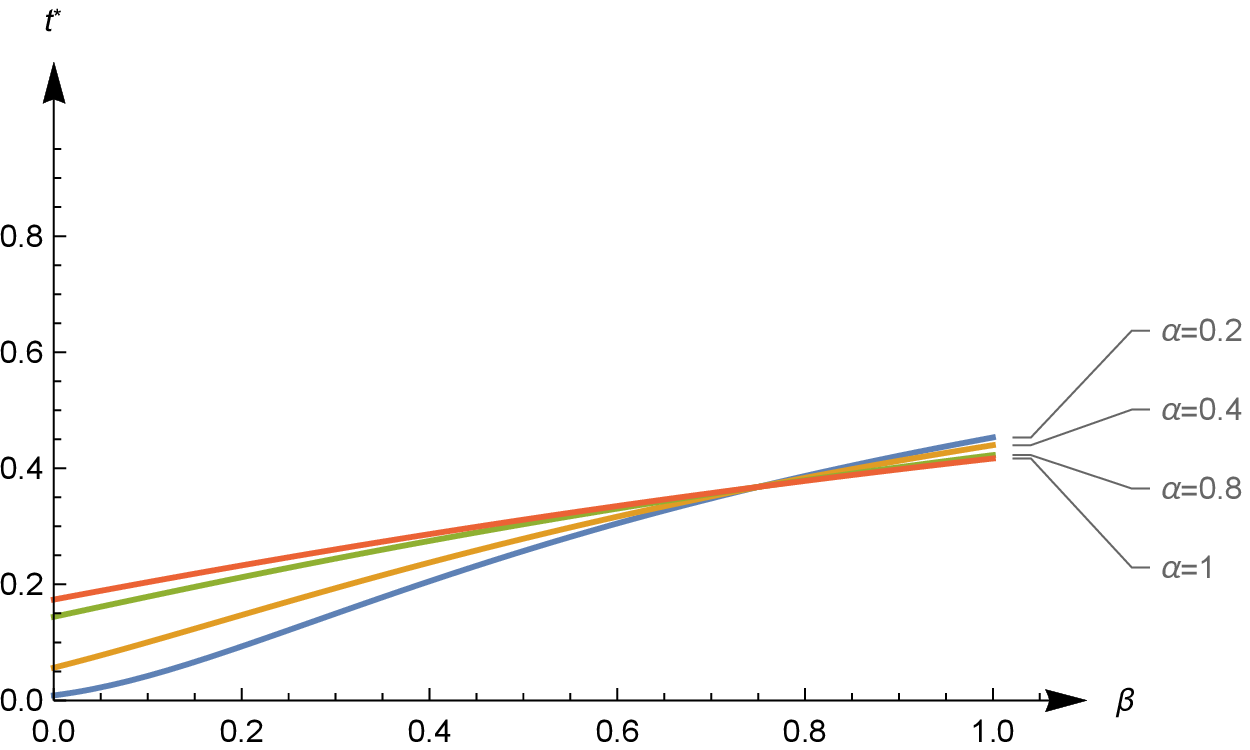}}
\caption{\label{siec2b}$\gamma=0.75$.}
\end{subfigure}
\hfill\begin{subfigure}[b]{0.45\textwidth}
\centering {\includegraphics[width=\textwidth]{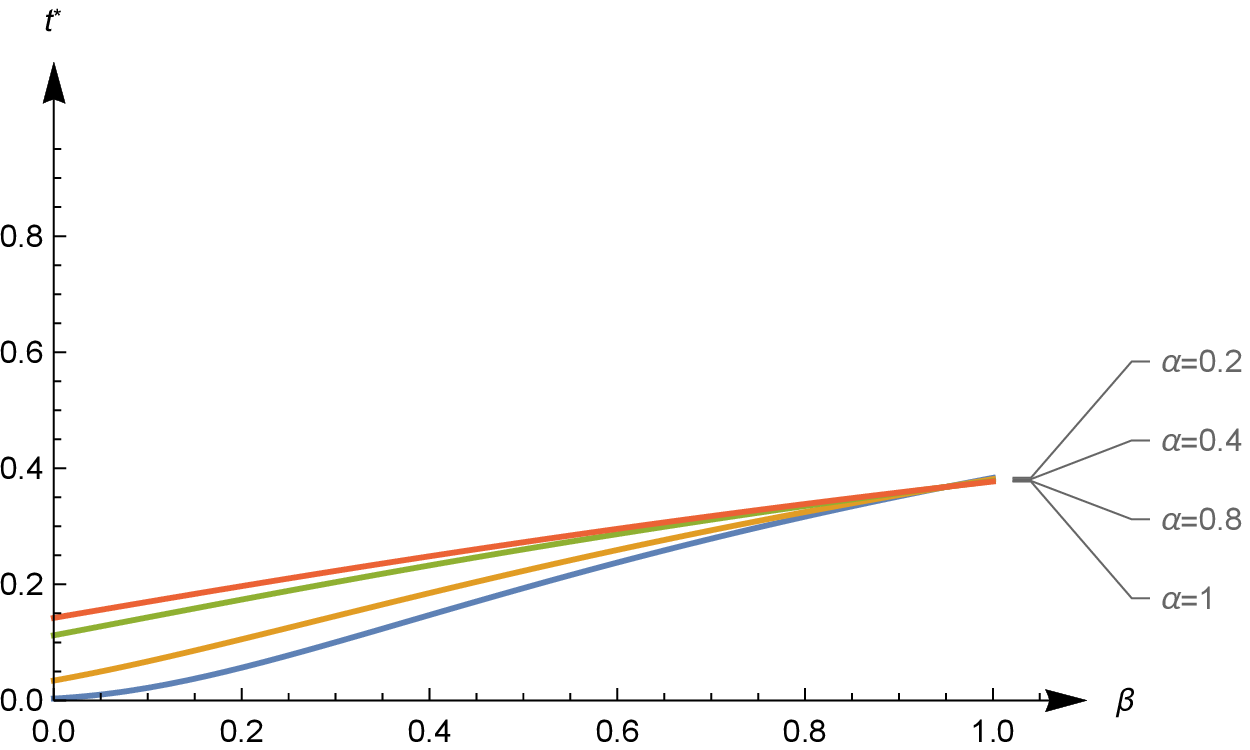}}
\caption{\label{siec3b}$\gamma=0.95$.}
\end{subfigure}
}
\caption{\label{KSzSiec23}Thresholds behavior as function of $\alpha$, $\beta$ and $\gamma$ in the optimal Markov time of the problem~\eqref{KSzPR2}.}
\end{figure}

\subsection{\label{MEKSzUEmp}Penalize the wrong choice when an uncertain employment is possible.} Let us consider the selection problem of Section~\ref{KSzVarRisk} and assume that the solicited candidate can refuse the proposal with probability $1-p$ (cf. \cite{Smi1975:uncertain}). Meeting with rejection, the selector has the right and obligation to further search. With each new candidate, the situation may repeat itself: the candidate to whom we propose may refuse, but there is also a risk that the candidate (i.e., new and relatively first) will not appear. Then, there is \emph{an additional loss related to the lack of candidates}. The availability of the candidate is modeled by a sequence of binary random variables. Choosing the option given availability,  the expected reward is defined by \eqref{reward1}. Denote $U(k)$ the maximum expected payoff when the process is in the state $k$. By assumption $U(\partial)=-\gamma$. The optimality principle of the dynamic programming gives the equation:
\begin{align}\label{optrewUEmp}
U(k)&=\max\big\{pg(k)+(1-p)\bT g(k),\bT U(k)\big\}\\
\nonumber&=\max\Big\{p[(\alpha+\beta)\frac{k}{N}-\beta]+(1-p)\left[\sum_{j=k+1}^N\frac{k}{j(j-1)}U(j)-\gamma\frac{k}{N}\right],\\
\nonumber&\hspace{8em}\left[\sum_{j=k+1}^N\frac{k}{j(j-1)}U(j)-\gamma\frac{k}{N}\right]\Big\},
\end{align}
$k=1,2,\ldots,N$, $U(N)=p(\alpha+\gamma)-\gamma$. The state $k$ belongs to the stopping region iff $g(k)\geq \bT U(k)$. 

Note that \eqref{optrewUEmp} covers the case of the classic problem of choosing the best object when we put $\alpha = 1$, $\beta = \gamma = 0$.
Let $v(t)=\lim_{N\rightarrow\infty}(\bT U(k)+\gamma\frac{k}{N})$ such that $\lim_{N\rightarrow\infty}\frac{k}{N}=t\in(0,1]$. On the left hand side a neighborhood of $1$ on limit of \eqref{optrewUEmp} we can observe that 
\begin{equation}\label{vEquation}
v(t)=\int_t^1\frac{t}{s^2}\left[[(\alpha+\beta)s-\beta]p+(1-p)[v(s)-\gamma s]\right]ds,
\end{equation}
with the boundary condition $v(1)=0$.
\begin{nlem}\label{SolvEquation}
The equation \eqref{vEquation} has a solution
\begin{equation*}
v(t)=\frac{\alpha p+\beta- (1-p)\gamma}{1-p}\left(t^p-t\right)+\beta t-\beta.
\end{equation*}
\end{nlem}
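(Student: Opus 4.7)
The plan is to convert the integral equation into a linear first order ODE, solve that ODE using the boundary condition $v(1)=0$, and identify the resulting closed form with the one stated in the lemma. I will also sketch an alternative direct verification as a sanity check at the end.

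First I would rewrite \eqref{vEquation} as $v(t)=t\,F(t)$, where
\begin{equation*}
F(t)=\int_t^1 \frac{1}{s^2}\bigl[p[(\alpha+\beta)s-\beta]+(1-p)[v(s)-\gamma s]\bigr]\,ds.
\end{equation*}
Since the integrand on the right is continuous in $s$ and in $v(s)$, the function $F$ is differentiable and $F'(t)=-\frac{1}{t^2}\bigl[p[(\alpha+\beta)t-\beta]+(1-p)[v(t)-\gamma t]\bigr]$. Differentiating $v(t)=tF(t)$ and substituting $F(t)=v(t)/t$ gives, after collecting like terms,
\begin{equation*}
v'(t)-\frac{p}{t}\,v(t)=\frac{\beta p}{t}-(\alpha+\beta)p+(1-p)\gamma,\qquad v(1)=0.
\end{equation*}

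Next I would solve this linear ODE with the standard integrating factor $t^{-p}$, yielding
\begin{equation*}
\bigl(t^{-p}v(t)\bigr)'=\beta p\,t^{-p-1}-\bigl[(\alpha+\beta)p-(1-p)\gamma\bigr]\,t^{-p}.
\end{equation*}
Integration produces $v(t)=-\beta-\frac{(\alpha+\beta)p-(1-p)\gamma}{1-p}\,t+C\,t^p$, and the boundary condition $v(1)=0$ fixes $C=\beta+\frac{(\alpha+\beta)p-(1-p)\gamma}{1-p}=\frac{\alpha p+\beta-(1-p)\gamma}{1-p}$. Rearranging the three terms so that the coefficient of $t^p$ is explicit and combining the $t$ and constant terms gives exactly the stated formula
\begin{equation*}
v(t)=\frac{\alpha p+\beta-(1-p)\gamma}{1-p}\bigl(t^p-t\bigr)+\beta t-\beta.
\end{equation*}

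The only technical step worth caring about is the passage from the integral equation to the ODE, which requires differentiability of $v$; this follows because the right side of \eqref{vEquation} is automatically $C^1$ in $t$ once $v$ is continuous. Uniqueness of the solution is then immediate from the uniqueness theorem for linear first order ODEs on $(0,1]$. As a confirmation, I would also plug the candidate back into \eqref{vEquation}: the integrals $\int_t^1 s^{-2}\,ds$, $\int_t^1 s^{-1}\,ds$ and $\int_t^1 s^{p-2}\,ds$ are elementary, and a short calculation shows the right hand side reduces to the same expression, which confirms the answer without any ambiguity about the differentiation step. The main (mild) obstacle is simply bookkeeping of the constants $\alpha,\beta,\gamma,p$ so that the coefficient of $t^p$ coming from the boundary condition matches the coefficient $\frac{\alpha p+\beta-(1-p)\gamma}{1-p}$ in the claim.
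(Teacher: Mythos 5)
Your derivation is correct, and it follows essentially the same route the paper takes: the paper states Lemma~\ref{SolvEquation} without an explicit proof, but in the proof of Theorem~\ref{OneUnBCP} it performs exactly your reduction of the integral equation to the linear first-order ODE $\hat{u}'(t)=p\hat{u}(t)/t+p\beta/t-(p(\alpha+\beta)-(1-p)\gamma)$ and solves it, arriving at the same closed form (note $p(\alpha+\beta)+(1-p)(\beta-\gamma)=\alpha p+\beta-(1-p)\gamma$). Your added remarks on differentiability, uniqueness, and direct back-substitution are sound and slightly more careful than the paper's treatment.
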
 
The function $g(t)$ is increasing and it has one intersection with $v(t)-\gamma t$. The solution $t^\star$ of the equation $g(t)=v(t)$ in $(0,1]$ is the optimal threshold for the strategy. 
The equation
\begin{align*}
(\alpha+\beta)t-\beta&=\frac{\alpha p+\beta- (1-p)\gamma}{1-p}\left(t^p-t\right)+(\beta-\gamma) t-\beta\\
\intertext{can be reformulated to}
\frac{p\alpha +\beta-(1-p)\gamma}{1-p}t^p&=\frac{\alpha+\beta}{1-p}t.  
\intertext{This implies} 
t^\star&=\left(\frac{p\alpha +\beta-(1-p)\gamma}{\alpha+\beta}\right)^{\frac{1}{1-p}}.
\end{align*}
\begin{figure}[tbh!]
		\centering
		\includegraphics[width=10.5cm]{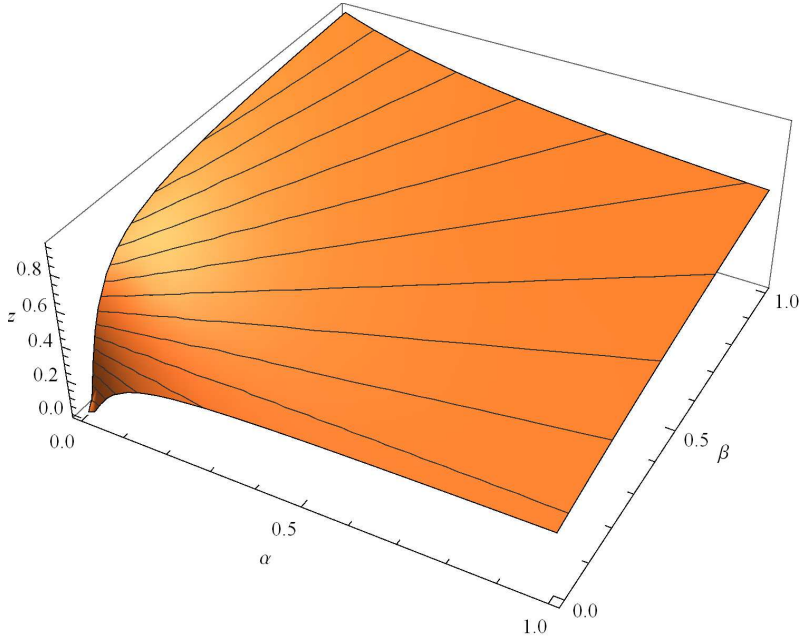} 
		\caption{\label{fig:3DThreshold2}Thresholds as function of $\alpha$ and $\beta$ for $\gamma=.05$ and $p=.95$.} 
	\end{figure}
\begin{nthm}[BCP with uncertain emplyment.]\label{OneUnBCP} The asymptotic solution of \eqref{optrewUEmp} is 
\begin{align}\label{MEPW}
u(t)&=\left\{
    \begin{array}{ll}
        p[(\alpha+\beta)t-\beta]+(1-p)[v(t)-\gamma t] &\text{ for $t\geq t^\star$,}\\
        (\alpha+\beta)t^\star-\beta &\text{ for $t< t^{\star}$.}
    \end{array}
\right.\\
\nonumber &=\left\{
    \begin{array}{ll}
        \frac{p(\alpha+\beta)+(1-p)(\beta-\gamma)}{1-p}(t^p-t)+(\beta-\gamma) t-\beta &\text{ for $t\geq t^\star$,}\\
        (\alpha+\beta)t^\star-\beta &\text{ for $t< t^{\star}$.}
    \end{array}
\right.
\end{align}
\end{nthm}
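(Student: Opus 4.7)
The plan is to pass to the limit in the dynamic programming equation \eqref{optrewUEmp} as $N\to\infty$ with $k/N\to t\in(0,1]$, following the same scheme of approximation of difference equations by integral/differential equations that is used to obtain Corollary~\ref{KSzAsymPR12}. Writing $u(t)=\lim_{N\to\infty} U(k)$ and recalling that $\bT U(k)=\sum_{j=k+1}^{N}\frac{k}{j(j-1)}U(j)-\gamma\frac{k}{N}\to v(t)-\gamma t$, the limiting Bellman equation becomes
\begin{equation*}
u(t)=\max\bigl\{p\,g(t)+(1-p)[v(t)-\gamma t],\; v(t)-\gamma t\bigr\}.
\end{equation*}
The difference of the two arguments equals $p\bigl[g(t)-(v(t)-\gamma t)\bigr]$, so the ``make an offer'' action dominates exactly when $g(t)\geq v(t)-\gamma t$.

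Next I would verify that $g(t)-[v(t)-\gamma t]$ is nondecreasing on $(0,1]$, the continuum analogue of the monotone case hypothesis of \citeauthor{ChoRobSig1971:Great} already invoked in the proof of Theorem~\ref{OneDMNIBCP}; this forces the optimal stopping set to be a one-sided interval $[t^\star,1]$. Inside this interval the value coincides with $p\,g(t)+(1-p)[v(t)-\gamma t]$, and substituting this expression into the defining limit of $v(t)$ gives the integral equation \eqref{vEquation}, whose closed form is provided by Lemma~\ref{SolvEquation}. Insertion of this explicit $v$ back into the stopping value and simplification deliver the upper branch of \eqref{MEPW}.

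The threshold $t^\star$ is then pinned down by the continuity (indifference) condition $g(t^\star)=v(t^\star)-\gamma t^\star$. Substituting the formula of Lemma~\ref{SolvEquation}, the affine terms $(\beta-\gamma)t-\beta$ cancel, and one arrives at the algebraic identity
\begin{equation*}
\frac{p\alpha+\beta-(1-p)\gamma}{1-p}\,t^{p}=\frac{\alpha+\beta}{1-p}\,t
\end{equation*}
already displayed just above the theorem, whence $t^\star=\bigl(\tfrac{p\alpha+\beta-(1-p)\gamma}{\alpha+\beta}\bigr)^{1/(1-p)}$. Because stopping is suboptimal for $t<t^\star$, the optimal policy there is to wait until time $t^\star$ and then to offer to the first candidate thereafter; hence $u$ is constant on $[0,t^\star)$ and equals $g(t^\star)=(\alpha+\beta)t^\star-\beta$, which is the lower branch of \eqref{MEPW}.

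The main technical obstacle is the rigorous justification of the discrete-to-continuum limit. One must show that $U(k)$ converges uniformly to $u(t)$ on subintervals $k/N\in[\varepsilon,1]$, that the Riemann-type sums $\sum_{j=k+1}^{N}\frac{k}{j(j-1)}U(j)$ converge to $\int_{t}^{1}\frac{t}{s^{2}}u(s)\,ds$ (handling the singularity near $s=0$), and that the one-step lookahead stopping region discovered asymptotically is indeed the optimal stopping region for every finite~$N$ beyond some $N_{0}$. This is precisely the machinery of \cite{DynYus1969:MarkovProcProbl,Muc1973:Diff,Muc1973:CSP,Sza1982:a-th} invoked after Corollary~\ref{KSzAsymPR12}; its application here is routine once boundedness and equicontinuity of $\{U(k)\}$ are established, both of which follow from the monotone-case argument already used for Theorem~\ref{OneDMNIBCP}.
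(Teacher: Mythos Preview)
Your proposal is correct and follows essentially the same route as the paper: pass to the continuum limit of \eqref{optrewUEmp}, identify the continuation value with $v(t)-\gamma t$ (the paper re-derives this as $\hat u(t)$ via the differential equation \eqref{EMKSzDiffEq}, which duplicates Lemma~\ref{SolvEquation}), locate the threshold through the indifference condition $g(t)=v(t)-\gamma t$, and invoke the monotone case of \cite{ChoRobSig1971:Great} to conclude that the one-step look-ahead region is optimal. Your write-up is in fact more explicit than the paper's about the limiting Bellman equation and the discrete-to-continuum justification.
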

\begin{proof}
The proof is based on the limit in the construction of the one-step look ahead stopping region\footnote{Stopping region when the current observation is better than the average of the next one.}. Based on the functional equation~\eqref{optrewUEmp} the function $\hat{u}(t)=\lim_{N\rightarrow\infty}\bT U(k)$ should fulfill the integral equation
\begin{equation}\label{EMKSzInEq}
\hat{u}(t)=\int_t^1\frac{t}{s^2}\left[p((\alpha+\beta)s-\beta)+(1-p)(\hat{u}(s)-\gamma s)\right]ds,
\end{equation}
with the boundary condition $\hat{u}(1)=\gamma$. The equivalent differential equation has the form
\begin{equation}\label{EMKSzDiffEq}
\hat{u}'(t)=p\frac{\hat{u}(t)}{t}+p\frac{\beta}{t}-(p(\alpha+\beta)-(1-p)\gamma)\text{ with $\hat{u}'(1)=-\gamma$}.
\end{equation}
The solution of the equation~\eqref{EMKSzDiffEq} 
\begin{equation}\label{EMKSzDiffEqSol}
\hat{u}(t)=\frac{p(\alpha+\beta)+(1-p)(\beta-\gamma)}{1-p}(t^p-t)+\beta t-\beta.
\end{equation}
The state $k\approx Nt$ on the border of the one-step look ahead region fulfills the equation $(\alpha+\beta)t-\beta=\hat{u}(t)-\gamma t$ and, by \eqref{EMKSzDiffEq}, $\hat{u}(t)=\frac{p\alpha+\beta-(1-p)\gamma}{1-p}(t^p-t)+\beta t-\beta$. It implies the asymptotic stopping region 
\begin{align*}
    B&=\{t\in(0,1]: t\geq  \left(\frac{p(\alpha+\beta)+(1-p)(\beta-\gamma)}{\alpha+\beta}\right)^{\frac{1}{1-p}}\}.
\end{align*}
This is the optimal stopping region, since the conditions for the monotone case of \cite{ChoRobSig1971:Great} are fulfilled. These conclude the thesis.\qed
\end{proof}

\begin{ncor}\label{KSzLemUnThres}
The optimal strategy for the best choice problem with uncertain employment and the penalties is to accept  the relatively best until the chosen one accepts the proposal, starting from the moment \begin{equation}\label{KSzUnThres}
t^\star(\alpha,\beta,\gamma,p)=\left(\frac{p\alpha +\beta-(1-p)\gamma}{\alpha+\beta}\right)^{\frac{1}{1-p}}.
\end{equation}
\end{ncor}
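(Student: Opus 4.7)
The plan is to read the corollary as the explicit resolution of the fixed-point equation that already emerged inside the proof of Theorem \ref{OneUnBCP}, so only a small amount of algebra and a monotonicity check remain. First I would recall that, by the one-step look-ahead characterization used in that proof, a state $k$ belongs to the asymptotic stopping region exactly when $g(t) \geq \hat{u}(t) - \gamma t$, where $t \approx k/N$ and $\hat{u}$ is the solution of the integral equation \eqref{EMKSzInEq}, with the explicit form \eqref{EMKSzDiffEqSol}. This converts the problem of identifying the optimal threshold into a problem about a single real equation.

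Next I would plug the explicit $\hat{u}$ from \eqref{EMKSzDiffEqSol} into the frontier equation $g(t)=\hat{u}(t)-\gamma t$, i.e.\
\[
(\alpha+\beta)t-\beta \;=\; \frac{p(\alpha+\beta)+(1-p)(\beta-\gamma)}{1-p}\bigl(t^{p}-t\bigr) + (\beta-\gamma)t - \beta .
\]
After cancelling $\beta$ and collecting the $t$ terms on one side, all coefficients combine cleanly so that one obtains $\frac{p\alpha+\beta-(1-p)\gamma}{1-p}\,t^{p} \;=\; \frac{\alpha+\beta}{1-p}\,t$, which after division by $t$ and raising to the power $1/(1-p)$ yields precisely
\[
t^{\star}(\alpha,\beta,\gamma,p) \;=\; \left(\frac{p\alpha+\beta-(1-p)\gamma}{\alpha+\beta}\right)^{\!\frac{1}{1-p}} .
\]

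To conclude that this $t^{\star}$ is actually the optimal threshold, and not merely a crossing point, I would verify that the stopping region is a one-sided set of the form $[t^{\star},1]$. The function $g(t)=(\alpha+\beta)t-\beta$ is strictly increasing and linear, while the difference $\hat{u}(t)-\gamma t$ is concave in $t$ on $(0,1]$ because $t^{p}$ is concave for $0<p<1$; together with the boundary values (at $t=1$ both sides satisfy the terminal condition dictated by $U(N)=p(\alpha+\gamma)-\gamma$), this ensures a unique intersection at $t^{\star}$ and the correct ordering of $g$ versus $\hat{u}-\gamma t$ on either side. Monotonicity of the stopping region in turn allows one to invoke the monotone-case theorem of \cite{ChoRobSig1971:Great}, exactly as in the proof of Theorem \ref{OneUnBCP}, so the one-step look-ahead rule is globally optimal. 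The prescription \emph{``from time $\lceil N t^\star\rceil$ on, propose to each relatively best candidate until one accepts''} is then just the verbal translation of this stopping region combined with the uncertain-employment mechanism.

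The main technical nuisance I anticipate is bookkeeping: keeping the two penalty terms $\beta$ and $\gamma$ straight while passing from the discrete Bellman equation \eqref{optrewUEmp} to the continuous limit, and in particular making sure the contribution $-\gamma\,k/N$ that appears inside $\bT U$ in \eqref{optrewUEmp} is correctly separated from the candidate-acceptance payoff $g$. Everything else — the monotonicity of $g$, the concavity of $\hat{u}-\gamma t$, and the routine algebra isolating $t^{\star}$ — is light work once that separation is done cleanly.
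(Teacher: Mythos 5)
Your proposal is correct and follows essentially the same route as the paper: the corollary is read off from the one-step look-ahead frontier equation $g(t)=\hat{u}(t)-\gamma t$ with $\hat{u}$ from \eqref{EMKSzDiffEqSol}, solved by exactly the algebra displayed before Theorem \ref{OneUnBCP} (note that $p(\alpha+\beta)+(1-p)(\beta-\gamma)=p\alpha+\beta-(1-p)\gamma$, so your coefficient agrees with the paper's), and optimality of the threshold rule is secured by the monotone case of \cite{ChoRobSig1971:Great} as in that theorem's proof. Your added uniqueness check via concavity of $\hat{u}(t)-\gamma t$ and the endpoint values is a harmless refinement of the paper's bare assertion that $g$ has one intersection with $v(t)-\gamma t$.
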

Let us observe that $\lim_{p\rightarrow 1} t^\star(\alpha,\beta,\gamma,p)=\exp\left(-\frac{\alpha+\gamma}{\alpha+\beta}\right)$ as stated in Corollary~\eqref{KSzAsymPR12} (v. formulae~\ref{KSzAsTres}).
\begin{ncor}\label{KSzp1}
The asymptotic optimal strategy of the decision maker in the case of $\beta=\gamma$ is $\tau^\star=\inf\{1\leq k\leq N: \text{The first stage } k\geq k^\star-1$\}, where $k^\star\cong [Np^{\frac{1}{1-p}}]+1$. Note that the stopping area does not depend on $\alpha$ and $\beta$! 
\end{ncor}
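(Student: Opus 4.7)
The plan is straightforward: this corollary is an algebraic specialization of Corollary~\ref{KSzLemUnThres}, combined with the standard asymptotic identification $k^\star \approx N t^\star$. I would first record the general threshold formula
\begin{equation*}
t^\star(\alpha,\beta,\gamma,p)=\left(\frac{p\alpha +\beta-(1-p)\gamma}{\alpha+\beta}\right)^{\frac{1}{1-p}},
\end{equation*}
established in Corollary~\ref{KSzLemUnThres}, and then substitute the assumption $\beta=\gamma$ into the numerator. This gives
\begin{equation*}
p\alpha+\beta-(1-p)\beta=p\alpha+p\beta=p(\alpha+\beta),
\end{equation*}
so the ratio inside the parenthesis collapses to $p$, and
\begin{equation*}
t^\star(\alpha,\beta,\beta,p)=p^{\frac{1}{1-p}}.
\end{equation*}
The independence claim (that the threshold does not involve $\alpha$ or $\beta$) is then immediate from this cancellation.

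Second, I would pass from the continuous threshold to the discrete stopping index. Since Theorem~\ref{OneUnBCP} identifies the asymptotic stopping region as $\{t\in(0,1]: t\geq t^\star\}$ and the proof shows that $k^\star/N \to t^\star$ as $N \to \infty$, the natural discrete counterpart is $k^\star \cong [N p^{\frac{1}{1-p}}]+1$. The optimal stopping rule itself is inherited verbatim from Theorem~\ref{OneUnBCP} and Corollary~\ref{MEKSzOSCorr}: reject options up to $k^\star-1$ and then accept the first candidate who is willing to be chosen, with the understanding that if a solicited candidate declines (which occurs with probability $1-p$), the search continues.

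There is essentially no technical obstacle here; the entire content is the algebraic observation that $\beta=\gamma$ produces the factorization $p(\alpha+\beta)$ in the numerator, allowing $\alpha+\beta$ to cancel. The only thing worth flagging explicitly is the sanity check: as $p \to 1$, one has $p^{1/(1-p)} \to e^{-1}$, which matches the threshold $e^{-1}$ one obtains from Corollary~\ref{KSzAsymPR12} with $\gamma=\beta$ (where $(\alpha+\gamma)/(\alpha+\beta) = 1$). This confirms that the uncertain-employment corollary degenerates correctly to the certain-employment case and justifies describing the result as a genuine generalization.
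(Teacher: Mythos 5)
Your proposal is correct and follows exactly the route the paper intends: substituting $\beta=\gamma$ into the threshold formula of Corollary~\ref{KSzLemUnThres} gives $p\alpha+\beta-(1-p)\beta=p(\alpha+\beta)$, so the ratio collapses to $p$ and $t^\star=p^{\frac{1}{1-p}}$, from which $k^\star\cong[Np^{\frac{1}{1-p}}]+1$ and the independence from $\alpha,\beta$ are immediate. The limiting check $p^{1/(1-p)}\rightarrow e^{-1}$ as $p\rightarrow 1$ matches the paper's own remark following Corollary~\ref{KSzLemUnThres}.
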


The observation formulated in Corollary~\ref{KSzp1} is surprising. In the problem of choosing the best object, if there is additional uncertainty related to the availability of options (random factor), which the decision-maker is not able to control, his level of interest in winning and losing no longer matters. With $p \rightarrow 1$, i.e. when the probability of availability of the analyzed options is close to $1$, both the threshold and the probability value of selecting the best object are equal to $\exp\{-1\}$, the solution of the classical no-information \textbf{BCP}.

\section{\label{KSzDurNIBCP}Expected duration of sequential decision problems.}%
\subsection{\label{DurOneStop}Duration for the one stop with the penalty cost.} 
When we start the task of sequential search for the best opportunity (possibilities, options), the problem is reformulated to the classic optimal stop model. The main objective is to optimize the criterion on a class of strategies, but there are other objectives. We do not know, \emph{a priori}, when this optimization process will be completed and how long it will take. If the cost of observation is not included in the objective function, there is no reason why the desired decision moment should be minimal or maximal. It is known from the general theory of martingales that if the withdrawal process is martingale, then each stopping time is optimal in the optimal stopping problem. For any payoff process, determining the average time spent in the decision-making process is important information for the decision maker. Similarly, in tasks where the number of permissible decision moments is greater (such as with the uncertain employment model), we are interested in the time until all options are exhausted or the possibilities for further searching are exhausted. When counting the time spent implementing the optimal strategy, its effectiveness is not analyzed. It is irrelevant in this case whether there is a wanted state among the stopped states. A similar study can be found in the article by \cite{SteSeaRap2003:HeuristicBCP}. \cite{MazPes2003:asymptotic} and recently by \cite{Dem2021:Expected}, \cite{ErnSza2021:Average} and \cite{Liu2021:QueryBasedSO}. The comparison of the average time of the decision-making process that we present contains new, interesting observations. 

\subsection{\label{KSzMTPenBCP}The mean time value of decision work with penalty for errornous choice.} The optimal strategy in this case is constructed in Section~\ref{KSzVarRisk} and is given explicitly in Theorem~\ref{OneDMNIBCP} and Corollary~\ref{MEKSzOSCorr}. The optimal strategy is the threshold one. For $\tau^\star$ given in the Corollary~\ref{MEKSzOSCorr} we have   $\bP(\omega:\tau^\star\leq N)=\frac{k^\star-1}{N}H_{k^\star,N}\cong \frac{\alpha+\gamma}{\alpha+\beta}\exp\{-\frac{\alpha+\gamma}{\alpha+\beta}\}<1$ (v. Formula \eqref{KSzAsProb}). To describe the decision-making time, we will modify this optimal Markov moment and create $ \tilde{\tau}^\star $ which are equal to the optimal Markov moments in the set $ \{\omega: \tau^\star<N \} $, and on the complement of this event they are equal to $ N $. We can interpret $ \tilde{\tau}^\star $ as the time to stop in one state or reach the last observation at $ N $. The entire decision-making process ends at $ \tilde{\tau}^\star $ and we are interested in the mean of this random variable. Let us denote $m(k)=\bE(\tilde{\tau}^\star|\cF_k)$, $k\leq N$.  
\begin{align*}
m(k)&=\left\{
\begin{array}{ll}
\displaystyle \sum_{j=k+1}^N\frac{k}{j(j-1)}j+N(1-\sum_{j=k+1}^N\frac{k}{j(j-1)}) & \mbox{ if $ k>k^\star$,}\\
\displaystyle \sum_{j=k^\star+1}^N\frac{k^\star}{j(j-1)}j+N(1-\sum_{j=k^\star+1}^N\frac{k^\star}{j(j-1)})   & \mbox{ if $ k\leq k^\star$.}
\end{array}
\right.\\
&=\left\{
\begin{array}{ll}
k H_{k,N}+k& \mbox{ if $ k>k^\star$,}\\
k^\star H_{k^\star,N}+k^\star   & \mbox{ if $ k\leq k^\star$.}
\end{array}
\right.
\end{align*}
Therefore, we can obtain the expected value of residence time in the decision-making process.  
\begin{ncor}When $n\rightarrow\infty$ we have:
\begin{align*}
    \hat{m}(t)&=\lim_{N\rightarrow\infty} \frac{m(k)}{N}=\left\{
\begin{array}{ll}
-t\ln(t)+t& \mbox{ if $ t>t^\star$,}\\
-t^\star\ln(t^\star)+t^\star   & \mbox{ if $ t\leq t^\star$.}
\end{array}
\right.\\
    m(0^+)&=[1+\frac{\alpha+\gamma}{\alpha+\beta}]\exp\left(-\frac{\alpha+\gamma}{\alpha+\beta}\right).
\end{align*}
\end{ncor}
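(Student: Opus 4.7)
The plan is to pass to the limit in the two-branch closed form for $m(k)$ displayed immediately before the corollary. Dividing that expression by $N$ and writing $k/N\to t$, the only nontrivial quantity to estimate asymptotically is the tail harmonic sum $H_{k,N}=\sum_{j=k}^{N-1}1/j$. Using the standard integral comparison $\ln((N-1)/(k-1))\geq H_{k,N}\geq\ln(N/k)$, one gets $H_{k,N}=-\ln(k/N)+O(1/k)$ as $k,N\to\infty$ with $k/N\to t\in(0,1]$; the error is negligible after multiplication by $k/N\to t$, since its contribution is $O(1/N)$.

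I would then treat the two branches separately. For $t>t^\star$, the formula $m(k)=kH_{k,N}+k$ gives $m(k)/N=(k/N)H_{k,N}+(k/N)\to-t\ln t+t$. For $t\leq t^\star$, the formula $m(k)=k^\star H_{k^\star,N}+k^\star$ is constant in $k$ below the threshold; combined with $k^\star/N\to t^\star$ (Corollary~\ref{KSzAsymPR12}, formula~\eqref{KSzAsTres}), it yields $m(k)/N\to-t^\star\ln t^\star+t^\star$. This establishes the piecewise asymptotic expression for $\hat m(t)$.

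The value $m(0^+)$ is then a direct substitution: with $t^\star=\exp(-(\alpha+\gamma)/(\alpha+\beta))$ one has $\ln t^\star=-(\alpha+\gamma)/(\alpha+\beta)$, so
\[
-t^\star\ln t^\star+t^\star=t^\star\Bigl(1+\tfrac{\alpha+\gamma}{\alpha+\beta}\Bigr)=\Bigl[1+\tfrac{\alpha+\gamma}{\alpha+\beta}\Bigr]\exp\Bigl(-\tfrac{\alpha+\gamma}{\alpha+\beta}\Bigr),
\]
as claimed. The main obstacle is purely a matter of bookkeeping: one must check that the $O(1/k)$ error in the approximation of $H_{k,N}$ is genuinely uniform in $t$ bounded away from $0$, which follows from the integral sandwich above, and one must make sure that the modified Markov time $\tilde\tau^\star=N$ on the no-stop event is the correct object to take expectations of. Both points are routine; no new ideas beyond the already-proven threshold asymptotics are required.
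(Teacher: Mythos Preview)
Your proposal is correct and matches the paper's intent: the corollary is stated in the paper without proof, as an immediate consequence of the closed form $m(k)=kH_{k,N}+k$ (resp.\ $k^\star H_{k^\star,N}+k^\star$) together with the threshold asymptotics of Corollary~\ref{KSzAsymPR12}. Your argument---dividing by $N$, using $H_{k,N}=-\ln(k/N)+o(1)$, and then substituting $t^\star=\exp(-(\alpha+\gamma)/(\alpha+\beta))$---is exactly the routine limit passage the paper leaves to the reader.
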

For $\alpha=1$ and $\beta=\gamma$ we have the result of \cite{Yeo1997:Duration}.

\subsection{The mean processing time of hesitating candidates.} The optimal strategy in this case is constructed in Section~\ref{MEKSzUEmp}, and is given explicitly in Theorem~\ref{OneUnBCP} and Corollary~\ref{KSzLemUnThres}. 
\begin{align*}*\label{m2k}
m(k)&=\left\{
\begin{array}{ll}
\displaystyle \sum_{j=k+1}^N\frac{k}{j(j-1)}[pj+(1-p)m(j)]&\\
\hspace{8em}+N(1-\sum_{j=k+1}^N\frac{k}{j(j-1)}) & \mbox{ if $ k>k^\star$,}\\
\displaystyle \sum_{j=k^\star+1}^N\frac{k^\star}{j(j-1)}[pj+(1-p)m(j)]&\\
\hspace{8em}+N(1-\sum_{j=k^\star+1}^N\frac{k^\star}{j(j-1)})   & \mbox{ if $ k\leq k^\star$.}
\end{array}
\right.
\end{align*}

Asymptotic of the mean time when $N\rightarrow\infty$ has the form:
\begin{align*}
    \hat{m}(t)&=\lim_{N\rightarrow\infty} \frac{m(k)}{N}=\left\{
\begin{array}{ll}
-pt\ln(t)+t+(1-p)\displaystyle\int_t^1\frac{t}{s^2}\hat{m}(s)ds& \mbox{ if $ t>t^\star$,}\\
-pt^\star\ln(t^\star)+t^\star+(1-p)\displaystyle\int_{t^{\star}}^1\frac{t}{s^2}\hat{m}(s)ds   & \mbox{ if $ t\leq t^\star$.}
\end{array}
\right.\\
\intertext{with the boundary condition $m(1)=1$.}
\end{align*}
As in Section~\ref{KSzMTPenBCP}, it allows us to calculate the average time to complete the decision-making process. 
\begin{nlem}
The solution of the integral equation with $\hat{m}(1)=1$ has the following form:
\begin{align*}
    \hat{m}(t)&=\left\{
\begin{array}{ll}
-pt\ln(t)+t+(1-p)\int_t^1\frac{t}{s^2}\hat{m}(s)ds& \mbox{ if $ t>t^\star$,}\\
-pt^\star\ln(t^\star)+t^\star+(1-p)\int_{t^{\star}}^1\frac{t}{s^2}\hat{m}(s)ds   & \mbox{ if $ t\leq t^\star$.}
\end{array}
\right.\\
    \hat{m}(t)&=\left\{
\begin{array}{ll}
\frac{1}{1-p}(t^p-pt)& \mbox{ if $ t>t^\star$,}\\
\frac{1}{1-p}({t^\star}^p-pt^\star)   & \mbox{ if $ t\leq t^\star$.}
\end{array}
\right.\\
\intertext{with $t^\star$ depending on the parameters $\alpha,\beta,\gamma$ and $p$ (v. Formula~\eqref{KSzUnThres})}
     t^\star&=\left(\frac{p\alpha +\beta-(1-p)\gamma}{\alpha+\beta}\right)^{\frac{1}{1-p}}.
\end{align*}
\end{nlem}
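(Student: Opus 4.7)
The plan is to reduce the integral equation on $[t^\star,1]$ to a linear first-order ODE by differentiation, solve it using the terminal condition $\hat m(1)=1$, and then use the ``frozen'' form of the optimal policy to extend the solution to $t\le t^\star$.

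On $[t^\star,1]$ set $F(t)=\int_t^1 s^{-2}\hat m(s)\,ds$, so that the equation reads $\hat m(t) = -pt\ln t + t + (1-p)\,t\,F(t)$. Since $F'(t) = -\hat m(t)/t^2$, differentiating gives
\begin{equation*}
\hat m'(t) = -p\ln t + (1-p) + (1-p)F(t) - (1-p)\,\frac{\hat m(t)}{t}.
\end{equation*}
The undifferentiated integral term is then eliminated by solving the original equation for $(1-p)F(t) = \bigl(\hat m(t)+pt\ln t - t\bigr)/t$ and substituting back. After cancellation this collapses to the linear ODE
\begin{equation*}
\hat m'(t) - \frac{p}{t}\,\hat m(t) = -p, \qquad \hat m(1)=1.
\end{equation*}
With integrating factor $t^{-p}$ one integrates $\bigl(t^{-p}\hat m(t)\bigr)' = -p\,t^{-p}$ from $1$ to $t$ and applies the boundary condition, obtaining the closed form $\hat m(t) = (t^p-pt)/(1-p)$ on $[t^\star,1]$.

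For $t\le t^\star$, I would argue directly from the pre-limit recursion for $m(k)$: the right-hand side of the defining expression in the range $k\le k^\star$ depends only on $k^\star$ and not on $k$, so $m(k)=m(k^\star)$ throughout the continuation region. Passing to the limit $N\to\infty$ yields $\hat m(t)=\hat m(t^\star)=\bigl((t^\star)^p - pt^\star\bigr)/(1-p)$, and the expression for $t^\star$ inherited from Corollary~\ref{KSzLemUnThres} completes the statement. (This also clarifies what the integral form for $t\le t^\star$ means: by continuity, the constant value $\hat m(t^\star)$ is automatically consistent with substituting $t^\star$ into the equation on the continuation region.)

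The main obstacle I expect is the elimination of the integral term in a clean way: the right-hand side of the equation for $t>t^\star$ carries $t$ both as a prefactor and inside the integration limit, so naive differentiation produces an expression involving both $\hat m(t)$ and $F(t)$. The trick of using the equation itself to rewrite $(1-p)F(t)$ in terms of $\hat m(t)$ is what makes the ODE close on $\hat m$ alone and yields the particularly simple coefficient structure. Once the ODE is obtained, the remaining work is the routine integrating-factor calculation and the verification that the boundary value $\hat m(1)=1$ fixes the constant as $1/(1-p)$.
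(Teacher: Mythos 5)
Your derivation is correct: the paper states this lemma without any proof, and your route (differentiate the integral equation on $[t^\star,1]$, use the equation itself to eliminate the integral term, solve the resulting linear ODE $\hat m'(t)-\frac{p}{t}\hat m(t)=-p$ with $\hat m(1)=1$ by the integrating factor $t^{-p}$) is exactly the method the paper applies to the analogous equations \eqref{EMKSzInEq}--\eqref{EMKSzDiffEq} in the proof of Theorem~\ref{OneUnBCP}. Your appeal to the pre-limit recursion for $k\le k^\star$ is also the right way to justify the constant value on the continuation region; note that it quietly corrects the stated second branch, whose prefactor $\frac{t}{s^2}$ should read $\frac{t^\star}{s^2}$ (as literally written it would give an affine, not constant, function of $t$, contradicting the closed form $\frac{1}{1-p}\bigl({t^\star}^p-pt^\star\bigr)$).
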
 
In conclusion, the asymptotic average time to complete the decision-making process with hesitating candidates 
\begin{equation*}
\hat{m}(0^+)=\frac{1}{1-p}\left[{\left(\frac{p\alpha +\beta-(1-p)\gamma}{\alpha+\beta}\right)}^{\frac{p}{1-p}}-p\left(\frac{p\alpha +\beta-(1-p)\gamma}{\alpha+\beta}\right)^{\frac{1}{1-p}}\right].
\end{equation*}    

\section{\label{KSzFinRem}Final remarks.}
The significance of the presented analyzes lies in linking the final effects of the strategy used with the visible effects for the decision maker. These, in turn, can affect the way you choose your behavior strategy. Behavioral strategies are often a consequence of habits rather than their rationalization (v. \cite{Aum2019:NHB}).

Observation of the impact of the strategy on the implementation of selected events is possible by determining the value of these events. Such parameterization of the model also allows the analysis of other properties of the solution: the value of the problem and the optimal strategy. An important feature of the solution is its implementation time, which can be used to indicate an additional desired feature of the solution, the objective function being not taken into account. Such a solution study plays a special role in the analysis of stopping games, where the number of alternative solutions is significant and players cannot communicate during the game. The author is convinced that the results obtained will allow a new approach to the analysis of behavior when making decisions on sequential issues.

In real problems, the mathematical model, which comes down to the choice of the moment, we also have a psychological aspect to be included in the model. Achieving the goal is usually a priority. However, \emph{a failure  can also be perceived differently}, and taking it into account has an impact on the choice or the option to solve the problem. Based on the analysis presented, we can see that measurable benefits and losses can significantly affect the results achieved, optimal behavior, and rational behavior. As already recalled in Section~\ref{KSzComBCP}, this phenomenon is often raised in experimental and mathematical psychology research. 
Recently, this aspect has not been overlooked in the basic research of mathematical models (v. \cite{Liu2021:QueryBasedSO,CrewsEtAl2019:Opportunity}). They considered a model for finding the best candidate in the selection process, in which each attempt may end up asking the (Geni) prophet if the candidate indicated is right or not (cf. also \cite{DotMar2014:hint,Ska2019:hint}). In the latter case, the search is resumed. The order in which the candidates are presented to the interview is random. The draw follows the \cite{Mall1957:NNRank} distribution. Further research is planned for more subtle sequential searches, such as, e.g., in the work by \cite{Sak1978:UncertSP},  \cite{Sza1982:a-th}, \cite{Ros1982:nonextremal} and \cite{Van2021:postdoc}. The author hopes that the observations presented in the article, due to the possibility of parameterize the behavior of the person conducting the trial, will be used for further empirical research and will allow the construction of diagnostic tools similar to those used by \cite{graHor2020:Effort} and those used in psychiatric diagnostics (cf. \cite{SchJarBriSza2003:Inter}) 

\section*{Acknowledgement} 

The research has been supported by Wrocław University of Science and Technology, Faculty of Pure and Applied Mathematics,  under the project 8211204601 MPK: 9130740000.

\renewcommand{\appendixname}{}
\appendix

\section{\label{sec:sample:appendix}Appendix}
\subsection{\label{KSzSecOptSP}The optimal stopping problems.} Let $(X_k,{\cF}_k,\bP_x)_{k=0}^N $ be a homogeneous Markov chain defined in a probability space $(\Omega,{\cF},\bP)$ with a state space $(\BbbE,\cB)$, and let $f:\BbbN\times\BbbE\rightarrow\Re$ be a sequence of real-valued $\cB$ measurable functions. Horizon $N$ is finite. The Markov chain is observed and the ``best realizations'' according to the function $f(\cdot,\cdot)$ are selected. Basically, the states of the process are related to the observations of random variables. We will also allow the possibility that there is $k$ such that $\bP_x\{X_k \in \BbbE\} \leq 1$. For consistency in notation, we will introduce the extended set of states $\bar{\BbbE} = \BbbE \cup \{\partial\}$. 
\begin{ndef}\label{MEKSzST}
The random variable $\tau:\Omega\rightarrow \BbbN$ is called a Markov moment if $\{\omega:\tau(\omega)=n\}\in\cF_n$ for every $n\in \BbbN$. The stopping time is the finite Markov moment $\tau$ \textit{a.e.}. 
\end{ndef}

Following \cite{Shi2008:OSR} (cf.~\cite{ChoRobSig1971:Great}, \cite{PesShi2006:Free-boundary}), define ${\cS^N}$ the set of Markov times with respect to $({\cF}_n)_{n=0}^N$. We admit that $\bP_x(\tau\leq N)<1$ for $\tau\in{\cS^N}$ (i.e. there is a positive probability that the Markov chain will not be stopped). The elements of ${\cS^N}$ are possible strategies in the optimal stopping problem. Denote $\cS_n^N=\{\tau\in\cS^N:\tau\geq n\}$. Let 
\begin{equation}\label{MEKSzOSV}
v(n,x)=\sum_{\tau\in\cS_n^N}\bE[f(\tau,X_{\tau})|X_n=x].
\end{equation}
The function $v(0,x)$ is the value of the optimal stopping problem. 

\begin{ndef}\label{MEKSzOSP}
The optimal stopping problem for Markov sequences consists of determining the value function $v(0,x)$ and determining the Markov time $\tau^\star\in\cS$ such that $v(0,x)=\bE_x f(X_{\tau^\star})$ if it exists. If such Markov time exists, it is called an optimal Markov moment (an optimal stopping time if it is finite). 
\end{ndef}
Let us be definite
\begin{align}\label{MEKSzOperT}
\bT f(n,x)&=\bE[f(n+1,X_{n+1})|X_n=x] \text{ (the mean operator);}\\
\bQ f(n,x)&=\max\{f(n,x),\bT f(n,x)\} \text{ (the maximum operator)}.\label{MEKSzOperQ}
\end{align}
\begin{nthm}
The value function $v(k,x)$ fulfills the equation.
\begin{align*}
v(N,x)&=f(N,x)\\
v(n,x)&=\max\{f(n,x),Tv(n,x)\} \text{ for $n\in\BbbN$}.
\end{align*}
The Markov time $\tau^\star=\inf\{n\in\BbbN: X_n\in A_n\}$, where $A_n=\{x\in\cB:f(n,x)\geq \bT v(n,x)\}$, is optimal.
\end{nthm}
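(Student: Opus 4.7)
The plan is to establish both claims by backward induction on $n$, running from $n=N$ down to $0$, which is the standard dynamic programming approach for finite-horizon optimal stopping.

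The base case at $n=N$ is immediate: every $\tau\in\cS_N^N$ must satisfy $\tau\equiv N$, hence $v(N,x)=\bE[f(N,X_N)\mid X_N=x]=f(N,x)$. For the inductive step, assume the Bellman formula holds at time $n+1$ and take an arbitrary $\tau\in\cS_n^N$. Since $\{\tau=n\}\in\cF_n$, I would split
\[
\bE[f(\tau,X_\tau)\mid X_n=x]=f(n,x)\one_{\{\tau=n\}}+\bE\bigl[f(\tau,X_\tau)\one_{\{\tau\geq n+1\}}\mid X_n=x\bigr].
\]
Applying the tower property together with the Markov property of $\{X_k\}$, the second summand is dominated by $\bT v(n,x)\,\bP_x(\tau\geq n+1)$, because $\tau\vee(n+1)$ belongs to $\cS_{n+1}^N$ and the inductive hypothesis controls its conditional expectation by $v(n+1,\cdot)$. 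Taking the supremum over $\tau\in\cS_n^N$ and observing that both alternatives (stop at $n$, or continue and act optimally from $n+1$ onward) are attainable yields $v(n,x)=\max\{f(n,x),\bT v(n,x)\}$.

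For the optimality of $\tau^\star=\inf\{n:X_n\in A_n\}$, with $A_N=\BbbE$ by convention so that $\tau^\star\leq N$, I would exploit the supermartingale structure of the Snell-envelope-type process $\{v(n,X_n)\}$. The Bellman equation directly gives $v(n,X_n)\geq\bE[v(n+1,X_{n+1})\mid\cF_n]$ in general, and on the continuation event $\{\tau^\star>n\}=\{X_n\notin A_n\}$ the inequality is an equality, $v(n,X_n)=\bT v(n,X_n)$. Consequently $\{v(n\wedge\tau^\star,X_{n\wedge\tau^\star})\}_{n=0}^{N}$ is a martingale under $\bP_x$. Optional stopping at the bounded time $\tau^\star$ gives
\[
v(0,x)=\bE_x[v(\tau^\star,X_{\tau^\star})]=\bE_x[f(\tau^\star,X_{\tau^\star})],
\]
the final equality holding because $X_{\tau^\star}\in A_{\tau^\star}$ forces $v(\tau^\star,X_{\tau^\star})=f(\tau^\star,X_{\tau^\star})$ by the definition of $A_n$. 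Combined with the reverse inequality $v(0,x)\geq\bE_x[f(\tau,X_\tau)]$ for every $\tau\in\cS^N$, which is the standard optional-stopping consequence of supermartingality, this identifies $\tau^\star$ as optimal.

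The main obstacle I anticipate is the bookkeeping around the extended state space $\bar{\BbbE}=\BbbE\cup\{\partial\}$ and the possibility $\bP_x(\tau\leq N)<1$. One must treat $\partial$ as absorbing with $f(\cdot,\partial)$ equal to the specified penalty so that the operator $\bT$ correctly aggregates the contributions of transitions into $\BbbE$ and into $\partial$, and one must check that on $\{\tau=\partial\}$ the payoff is captured consistently on both sides of the Bellman identity. Once this convention is fixed, the conditional expectations appearing in the induction and in the optional-stopping step are unambiguous, and the argument applies uniformly to the concrete reward structures used earlier in the paper.
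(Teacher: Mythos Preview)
The paper does not actually supply a proof of this theorem: it is stated in the Appendix as a standard result, with the preceding text pointing to \cite{Shi2008:OSR}, \cite{ChoRobSig1971:Great}, and \cite{PesShi2006:Free-boundary}. There is therefore nothing in the paper to compare your argument against beyond those references.

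Your backward-induction derivation of the Bellman equation together with the Snell-envelope martingale argument for the optimality of $\tau^\star$ is exactly the classical proof found in those sources, and it is correct as outlined. The one point worth tightening is your remark that ``$A_N=\BbbE$ by convention so that $\tau^\star\leq N$'': in the paper's setup one may have $\bP_x(\tau\leq N)<1$ because the chain can be absorbed at $\partial$, so rather than forcing $\tau^\star\leq N$ you should, as you yourself note at the end, treat $\partial$ as an ordinary state with payoff $f(\cdot,\partial)$ and let $\bT$ average over $\bar\BbbE$. With that convention the optional-stopping step goes through verbatim on the bounded horizon $N$, and the absorbing-state contribution is accounted for inside the expectation rather than through a separate case.
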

\begin{nrem}\label{MEKSzOLA}[\text{An one-step look-a-head policy (OLA)}]
Let $B_n=\{x\in\cB:f(n,x)\geq \bT f(n,x)\}$ and $\tau_{\textbf{OLA}}=\inf\{n\in\BbbN:  X_n\in B_n\}$. The strategy $\tau_{\textbf{OLA}}$ is called \text{One-step Look-a-head Policy} (\textbf{ OLA }).
\end{nrem}
\abbreviations{The following abbreviations are used in this article:\\

\noindent 
\scriptsize\raggedleft
\begin{tabular}{@{}ll}
$(\Omega,{\cF},\bP)$& the probability space (p. \pageref{KSzSecOptSP})\\
$Z_k$, ($Y_k$)& the absolute (relative) rank (p. \pageref{KSzRR1})\\
 \textbf{BCP}&  \emph{Best Choice Problem} (p. \pageref{KSzBCP})\\
\textbf{DM} ($\textbf{DM}_\text{win}$)& Decision-maker (\emph{the aim of the} \textbf{DM})(p. \pageref{KSzDMwin})\\
$(X_n,{\cF}_n,\bP_x)$&the Markov process (p. \pageref{KSzSecOptSP})\\ 
${\cF}_n$& the filtration (p. \pageref{KSzSecOptSP})\\
$\bT$ & the mean operator of the Markov system (p. \pageref{MEKSzOperT})\\
$\bQ$ & the maximum operator (p. \pageref{MEKSzOperQ})\\
$v(k,x)$& the value of the stopping problem (p. \pageref{MEKSzOSP})\\
$\tau$, $\sigma$& Markov moments, stopping times (p. \pageref{MEKSzST})\\
\textbf{OLA} policy& the one step look ahead strategy (p. \pageref{MEKSzOLA}).
\end{tabular}
}
\normalsize
\medskip






\end{document}